\theoremstyle{plain}
\newtheorem{theorem}{Theorem}
\newtheorem{lemma}{Lemma}
\theoremstyle{remark}
\newtheorem{assumption}{Assumption}
\newtheorem{remark}{Remark}
\newcommand{\bbA}{{\bf A}}
\newcommand{\bbC}{{\bf C}}
\newcommand{\bbH}{{\bf H}}
\newcommand{\bbS}{{\bf S}}
\newcommand{\bbT}{{\bf T}}
\newcommand{\bbL}{{\bf L}}
\newcommand{\bbI}{{\bf I}}
\newcommand{\bbM}{{\bf M}}
\newcommand{\bbB}{{\bf B}}
\newcommand{\bbP}{{\bf P}}
\newcommand{\bbQ}{{\bf Q}}
\newcommand{\bbX}{{\bf X}}
\newcommand{\bbx}{{\bf x}}
\newcommand{\bbY}{{\bf Y}}
\newcommand{\bby}{{\bf y}}
\newcommand{\bbW}{{\bf W}}
\newcommand{\bbU}{{\bf U}}
\newcommand{\bbV}{{\bf V}}
\newcommand{\tr}{{\rm tr}}
\newcommand{\Tr}{{\rm Tr}}
\begin{document}

\begin{frontmatter}
\title{The limiting spectral distribution of the sample canonical correlation matrix}
\runtitle{The limiting spectral distribution of the sample canonical correlation matrix}

\begin{aug}
\author[A]{\fnms{Xiaozhuo}~\snm{Zhang}\ead[label=e1]{zhangxz722@nenu.edu.cn}}
\address[A]{Key Laboratory of Applied Statistics of MOE\, School of Mathematics and Statistics\, 
Northeast Normal University\, Changchun, Jilin 130024, China\printead[presep={,\ }]{e1}}

\end{aug}

\begin{abstract}
	In this paper, we investigate the spectral properties of the sample canonical correlation (SCC) matrix under the alternative hypothesis to provide a more comprehensive description of the association between two sets of variables. Our research involves establishing the relationship between the eigenvalues of the SCC matrix and the block correlation matrix, as well as proving the universality of the Stieltjes transform of the limiting spectral distribution (LSD) of the block correlation matrix. By combining the results from the normal case, we establish the limiting spectral distribution (LSD) of the SCC matrix with a general underlying distribution under the arbitrary rank alternative hypothesis. Finally, we present several simulated examples and find that they fit well with our theoretical results.
\end{abstract}

\begin{keyword}[class=MSC]
\kwd[Primary ]{60B10}
\kwd[; secondary ]{62H10}
\end{keyword}

\begin{keyword}
\kwd{Canonical correlation analysis}
\kwd{Block correlation matrix}
\kwd{Limiting spectral distribution}
\end{keyword}

\end{frontmatter}


\section{Model and Relationship}	
When we encounter multivariables or high dimensional data, principal component analysis as a traditional research method for data reduction and interpretation, fought against data explosion by focusing on the significant information reflected in original data.
However, when confronted with more complex problems in practical applications such as statistical learning, wireless communications, financial economics and population genetics, Canonical Correlation Analysis (CCA) was proposed to identify and quantify the associations between two sets of variables.

Traditional multivariate analysis textbooks \cite{hotelling1935most,hotelling1936rela,Mu1982,Anderson2003} use matrix language to deconstruct the CCA problem and draw a heuristic conclusion: the key quantity that reflects the correlation between two groups of variables is the characteristic space of the canonical correlation matrix. Specifically, for $p+q$ dimensional general population $(\tilde{\bbx}',\tilde{\bby}')'$ with mean $\bf 0$, the covariance between $\tilde{\bbx}$ and $\tilde{\bby}$ has the following expression
\begin{align*}
\mathbb{V}ar\begin{pmatrix} \tilde{\bbx}\\   \tilde{\bby}\end{pmatrix}=\begin{pmatrix}
		\tilde{\boldsymbol{\Sigma}}_{xx}, \tilde{\boldsymbol{\Sigma}}_{xy}\\
		\tilde{\boldsymbol{\Sigma}}_{yx}, \tilde{\boldsymbol{\Sigma}}_{yy}
	\end{pmatrix}.
\end{align*}
The aimed canonical correlation coefficients can be characterized as the square roots of the eigenvalues of the population canonical correlation (PCC) matrix $\mathcal{C}_{xy}=\tilde{\mathbf\Sigma}_{xx}^{-1/2}\tilde{\mathbf\Sigma}_{xy}\tilde{\mathbf\Sigma}_{yy}^{-1}\tilde{\mathbf\Sigma}_{yx}\tilde{\mathbf\Sigma}_{xx}^{-1/2}$. According to scale invariance, we can simply assume that the covariance matrix of the population $(\bbx',\bby')'$ with mean $\bf 0$ has the following structure:
\begin{align*}
\mathbb{V}ar\begin{pmatrix} \bbx\\   \bby\end{pmatrix}=\begin{pmatrix}
			\bbI_p,&\boldsymbol{\Lambda}\\
		\boldsymbol{\Lambda},&\bbI_q	\end{pmatrix}.
\end{align*}
For sample matrices $\bbX_{p\times n}$ and $\bbY_{q\times n}$ from population $\bbx$ and $\bby$ respectively, write $\bbS_{xx}=\frac{1}{n}\bbX\bbX'$, $\bbS_{yy}=\frac{1}{n}\bbY\bbY'$, and $\bbS_{xy}=\frac{1}{n}\bbX\bbY'$.
Then, we define the sample canonical correlation (SCC) matrix as $\mathcal{S}_{xy}=\bbS_{xx}^{-1/2}\bbS_{xy}\bbS_{yy}^{-1}\bbS_{yx}\bbS_{xx}^{-1/2}$ to spy on the information of population canonical correlations.

We pause to review some important studies on the spectrum of the SCC matrix.
Under the null hypothesis, i.e., $\bbx$ and $\bby$ are independent, Yang and Pan established the LSD of SCC matrix \cite{2012The} and the central limiting theorem(CLT) for linear spectral statistics (LSS) \cite{2015Independence}. Under finite rank case of alternative hypothesis, i.e., the fixed rank of $\boldsymbol{\Lambda}$, Bao et al. \cite{BaoH19Canonical} gave the asymptotic distribution of spiked eigenvalues and extreme eigenvalues with Gaussian entries, then Yang and Ma showed the universality of the above results in \cite{yang2022limiting,ma2021sample}.  
For the general alternative hypothesis, i.e., the arbitrary rank of $\boldsymbol{\Lambda}$, Bai et al. \cite{BHHJZ2020} proposed a new way to transform the eigenvalues of the SCC matrix to the function of that of the noncentral Fisher matrix under Gaussian assumption. Then under the same condition, Hou et al. \cite{hou2023spiked} gave the CLT for spiked eigenvalues of the SCC matrix via the corresponding results of noncentral Fisher matrix. Now we face the problem of asymptotic results for the SCC matrix under alternative hypothesis and general underlying distribution. 

Noticed it is hard to extend the results to the data matrices with generally distributed entries using the idea in \cite{BHHJZ2020}, because it depended crucially on the equivalence of uncorrelation and independence of normal distribution. Essentially, the goal of CCA is to seek the relationship between the block $\bbx$ and block $\bby$ in random vector $(\bbx',\bby')'$, which is the same as that of the block correlation matrix. Block correlation matrix dated back to Bao et al. \cite{Bao2017test}, which was developed by sample correlation matrix.  Recently the results \cite{bao2022spectral} about this concept are expanded to general cases, for example, no matter  whether $k$ is fixed or divergent, more mild conditions for dimensional size ratio.

Our approach adapts the same strategy as in \cite{2012The}. The key innovation is to find the relationship between the SCC matrix and block correlation matrix and to get the exact expression for Stieltjes transform of the LSD of the SCC matrix. Now we focus on case $k=2$ to calculate the relationship between the SCC matrix and block correlation matrix. We construct the block correlation matrix $\bbB$ for sample matrices $\bbX_{p\times n}$ and $\bbY_{q\times n}$, where
\begin{align*}
	\bbB=&\begin{pmatrix}
		(\bbX\bbX')^{-\frac{1}{2}}, &\boldsymbol{0}\\
		\boldsymbol{0}, &(\bbY\bbY')^{-\frac{1}{2}}
	\end{pmatrix}
	\begin{pmatrix}
		\bbX\bbX', &\bbX\bbY'\\
		\bbY\bbX', &\bbY\bbY'
	\end{pmatrix}
	\begin{pmatrix}
		(\bbX\bbX')^{-\frac{1}{2}}, &\boldsymbol{0}\\
		\boldsymbol{0}, &(\bbY\bbY')^{-\frac{1}{2}}
	\end{pmatrix}\\
=&\begin{pmatrix}
		\bbI_p,&(\bbX\bbX')^{-\frac{1}{2}}\bbX\bbY'(\bbY\bbY')^{-\frac{1}{2}}\\
		(\bbY\bbY')^{-\frac{1}{2}}\bbY\bbX'(\bbX\bbX')^{-\frac{1}{2}},&\bbI_q
	\end{pmatrix}.
\end{align*}
The block correlation matrix presented above is a natural extension of the sample correlation matrix in block form.
Considering the Green function $G_b(z)=(\bbB-z)^{-1}$ of $\bbB$, we have
\begin{align*}
	&G_b(z)=\left[\begin{pmatrix}
		\bbI_p,&(\bbX\bbX')^{-\frac{1}{2}}\bbX\bbY'(\bbY\bbY')^{-\frac{1}{2}}\\
		(\bbY\bbY')^{-\frac{1}{2}}\bbY\bbX'(\bbX\bbX')^{-\frac{1}{2}},&\bbI_q
	\end{pmatrix}-\begin{pmatrix}
		z\bbI_p,&\boldsymbol{0}\\
		\boldsymbol{0},&z\bbI_q
	\end{pmatrix}\right]^{-1}\\
	=&\begin{pmatrix}
		(1-z)\bbI_p,&(\bbX\bbX')^{-\frac{1}{2}}\bbX\bbY'(\bbY\bbY')^{-\frac{1}{2}}\\
		(\bbY\bbY')^{-\frac{1}{2}}\bbY\bbX'(\bbX\bbX')^{-\frac{1}{2}},&(1-z)\bbI_q
	\end{pmatrix}^{-1}\\
	=&\begin{pmatrix}
		[(1\!-\!z)\bbI_p\!-\!\frac{1}{1\!-\!z}(\bbX\bbX')^{-\frac{1}{2}}\bbX\bbP_y\bbX'(\bbX\bbX')^{-\frac{1}{2}}]^{-1},& \ast \\
		\ast,&[(1\!-\!z)\bbI_q\!-\!\frac{1}{1\!-\!z}(\bbY\bbY')^{-\frac{1}{2}}\bbY\bbP_{x}\bbY'(\bbY\bbY')^{-\frac{1}{2}}]^{-1}
	\end{pmatrix}\\
	=&(1-z)\begin{pmatrix}
		[(1\!-\!z)^{2}\bbI_p\!-\!(\bbX\bbX')^{-\frac{1}{2}}\bbX\bbP_y\bbX'(\bbX\bbX')^{-\frac{1}{2}}]^{-1},& \ast\ast \\
		\ast\ast,&[(1\!-\!z)^2\bbI_q\!-\!(\bbY\bbY')^{-\frac{1}{2}}\bbY\bbP_x\bbY'(\bbY\bbY')^{-\frac{1}{2}}]^{-1}
	\end{pmatrix}.
\end{align*}
Taking trace for the matrix on both sides of the equation, that is 
\begin{align*}
	\Tr G_b=(z-1)[\Tr G_{xy}((1-z)^2)+\Tr G_{yx}((1-z)^2)],
\end{align*}
where the function $\frac{1}{p}\Tr G_{xy}(z):=\frac{1}{p}\Tr(\mathcal{S}_{xy}-z)^{-1}$  is the Stieltjes transform of the SCC matrix, and $\frac{1}{q}\Tr G_{yx}(z):=\frac{1}{q}\Tr(\mathcal{S}_{yx}-z)^{-1}$ is similar. Notice that $\mathcal{S}_{xy}$ and $\mathcal{S}_{yx}$ share the same non-zero eigenvalues, so the limit of $\frac{1}{q}\Tr G_{xy}$ under the alternative hypothesis can be converted to the limit of $\frac{1}{p+q}\Tr G_b$. This implies the spectral distributions of $\bbB$ and the SCC matrix have some relationship at a global level. 

Considering the correlation coefficients matrix  $\boldsymbol{\Lambda}$, we can assume the entries of $\bbW_{p\times n}$ and $\bbY_{q\times n}$ are i.i.d., and 
\begin{align}\label{xzassume}
	\bbX_{p\times n}=\boldsymbol{\Lambda}_{p\times q}\bbY_{q\times n}+\boldsymbol{\Gamma}\bbW_{p\times n},\ \bbY_{q\times n}=\bbY_{q\times n}
\end{align}
where $\boldsymbol{\Gamma}\boldsymbol{\Gamma}'=\bbI_p-\boldsymbol{\Lambda}\boldsymbol{\Lambda}'$.
Because of the scale invariance in form \eqref{xzassume}, we replace $\bbX,\bbY,\bbW$ in \eqref{xzassume} with the $\frac{1}{\sqrt{n}}\bbX,\frac{1}{\sqrt{n}}\bbY, \frac{1}{\sqrt{n}}\bbW$,  but still expressed as $\bbX,\bbY,\bbW$, respectively.

The paper is organized as follows. In Section 2, we provide the main results of the paper. Sections 3-5 show the important stages in the proof of Theorem \ref{th12}. The lemmas appearing in previous sections and their proof are stated in Section 6.

\section{Main Result}
\begin{theorem}\label{th11}
Denote the ordered non-zero eigenvalues of $\mathcal{S}_{xy}$ by $l_1^2\geq l_2^2\geq \dots \geq l_{\min\{p,q\}}^2$, then the $p+q$ eigenvalues of $\bbB$ will consist of 
\begin{align*}
\{\underbrace{1+l_1,\dots,1+l_{\min\{p,q\}}}_{\min\{p,q\}},\underbrace{1,\dots,1}_{\max\{p,q\}-\min\{p,q\}},\underbrace{1-l_{\min\{p,q\}},\dots,1-l_1}_{\min\{p,q\}}\}.
\end{align*} 
Moreover, at the global level, we have 
\begin{align*}
	\Tr G_b=(z-1)[\Tr G_{xy}((1-z)^2)+\Tr G_{yx}((1-z)^2)].
\end{align*}
\end{theorem}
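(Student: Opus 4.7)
The theorem has two parts — an explicit description of the eigenvalues of $\bbB$ in terms of the canonical correlation values $l_i$, and a trace identity between Stieltjes transforms. The trace identity has essentially been derived in the block-inverse computation displayed above the theorem, so my primary task would be to organize that computation cleanly and then handle the eigenvalue bookkeeping for the first part.

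For the spectrum of $\bbB$, my plan is to write $\bbB = \bbI_{p+q} + \mathcal{M}$ with
\begin{align*}
\mathcal{M} = \begin{pmatrix} \boldsymbol{0} & \bbM \\ \bbM' & \boldsymbol{0} \end{pmatrix}, \qquad \bbM = (\bbX\bbX')^{-1/2}\, \bbX \bbY'\, (\bbY\bbY')^{-1/2}.
\end{align*}
A short algebraic check gives $\bbM\bbM' = \mathcal{S}_{xy}$ (the $1/n$ factors in $\bbS_{xx},\bbS_{xy},\bbS_{yy}$ cancel), so the non-zero singular values of $\bbM$ are exactly $l_1\ge\cdots\ge l_{\min\{p,q\}}\ge 0$. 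Next I would take an SVD $\bbM = \bbU\,\boldsymbol{\Sigma}\,\bbV'$ and conjugate $\mathcal{M}$ by $\mathrm{diag}(\bbU,\bbV)$, reducing it to a block matrix whose spectrum can be read off: paired $2\times 2$ diagonal blocks contribute eigenvalues $\pm l_i$ for $i=1,\dots,\min\{p,q\}$, while the remaining rows and columns (of size $\max\{p,q\}-\min\{p,q\}$, coming from the larger null space of either $\bbM$ or $\bbM'$) contribute zero eigenvalues. Shifting by the identity then completes the count.

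For the trace identity, I would replay the Schur-complement computation from the excerpt. Setting $\bbP_y = \bbY'(\bbY\bbY')^{-1}\bbY$ so that $\bbM\bbM' = (\bbX\bbX')^{-1/2}\bbX\bbP_y\bbX'(\bbX\bbX')^{-1/2} = \mathcal{S}_{xy}$, the $(1,1)$ block of $G_b(z)$ works out to $(z-1)\,G_{xy}((1-z)^2)$ after combining the factor $(1-z)$ from the Schur complement with the sign flip $[(1-z)^2\bbI_p-\mathcal{S}_{xy}]^{-1} = -G_{xy}((1-z)^2)$. The same argument with $x$ and $y$ exchanged gives the $(2,2)$ block as $(z-1)\,G_{yx}((1-z)^2)$. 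Since off-diagonal blocks contribute nothing to the trace, summing yields the stated identity.

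The only point that requires genuine care — though not a real obstacle — is the multiplicity count in the first part: the rectangular nature of $\bbM$ forces exactly $|p-q|$ zero eigenvalues in $\mathcal{M}$, and hence $|p-q|$ eigenvalues of $\bbB$ equal to $1$. I would verify this by a dimension count inside the SVD decomposition. Beyond this, both parts reduce to elementary linear-algebra identities, so I anticipate no analytic difficulty.
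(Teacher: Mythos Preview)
Your proposal is correct and follows essentially the same route as the paper: the eigenvalue description is the content of the paper's Lemma~\ref{lemma0}, proved there via the SVD of the off-diagonal block, and the trace identity is exactly the Schur-complement computation of Section~1. The only cosmetic difference is that you obtain all eigenvalues at once by conjugating $\mathcal{M}$ by $\mathrm{diag}(\bbU,\bbV)$ and reading off the $2\times 2$ blocks, whereas the paper verifies the eigenvector equations for $1+l_i$, $1-l_i$, and $1$ separately; your packaging is a bit cleaner but the substance is identical.
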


\begin{remark}
We establish a general result in Lemma \ref{lemma0}. The proof of Theorem \ref{th11} is a specific instance where we select the square root matrix of $\mathcal{S}_{xy}$ as $\bbA$. By combining this with the argument for $\Tr G_b$ in Section 1, we successfully complete the proof. 
\end{remark}

\begin{assumption}[Assumption on the dimensions]\label{assum1}
Assume that $p/n\to c_1$ and $q/n\to c_2$ as $n\to\infty$, such that $c_1+c_2<1$.  Meanwhile, we always work with the additional conditions: $p/q\to c_3<1$.
\end{assumption}
\begin{assumption}[Assumption on matrix entries]\label{assum2}
	Assume that real matrices $\bbW_{p\times n}$ and $\bbY_{q\times n}$ in \eqref{xzassume} have i.i.d. entries. Furthermore, for all $a \in [\![ p ]\!]$ and $b \in [\![ n ]\!]$, the distribution of $W_{ab}$ belong distribution set $\mathcal{F}$, and for all $a \in [\![ q ]\!]$ and $b \in [\![ n ]\!]$, the distribution of $Y_{ab}$ also belong distribution set $\mathcal{F}$, where 
	\begin{align*}
		\mathcal{F}=\{F_y:\mathbb{E}[z]=0, \mathbb{E}[|z|^2]= 1, \ \mbox{and}\  \mathbb{E}[|z|^4]<\infty\}.
	\end{align*}
\end{assumption}
\begin{assumption}[Assumption on $\boldsymbol{\Lambda}$]\label{assum3}
We assume the operator norm of $\boldsymbol{\Lambda}$ is bounded and keep away from $1$.  So we conclude the operator norm of $\boldsymbol{\Gamma}$ is bounded and away from $0$. The empirical distribution the singular values of $\boldsymbol{\Lambda}$ converges weakly to a non random distribution $H$.
\end{assumption}

\begin{theorem}\label{th12}
	Under Assumptions \ref{assum1}-\ref{assum3}, we have the LSDs of the SCC matrix and its corresponding block correlation matrix are universal. Then Stieltjes transform of the LSD of the SCC matrix is the unique solution of the following equation set:
	\begin{align}\label{CCALSDs}
	G_1(z,m)&\frac{c_2(1-z)[(1-z)m-1]}{1-c_2+c_1[(1-z)m-1]}=\int\left[\frac{n}{q}\frac{x^2}{1-x^2}-\frac{G_2(z,m)}{G_1(z,m)}\right]^{-1}dH(x)\\
	G_1(z,m)=&1-c_1-c_1[(1-z)m-1]+\frac{c_1(1-c_2)(1-z)[(1-z)m-1]}{1-c_2+c_1z[(1-z)m-1]}\nonumber\\
	G_2(z,m)=&\frac{1-c_1-c_2+c_1(1-z)m}{c_2(1-z)}\left[1-\frac{(1-z)(1-c_1)}{1-c_2-c_1z+c_1zm(1-z)}\right]\nonumber .
    \end{align}
\end{theorem}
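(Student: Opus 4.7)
My plan is to follow the three-stage route advertised in the abstract and Section~1: (i) use Theorem~\ref{th11} to translate the LSD question about $\mathcal{S}_{xy}$ into an LSD question about the block correlation matrix $\bbB$; (ii) establish universality of the Stieltjes transform of the LSD of $\bbB$, so that one may assume the entries of $\bbW$ and $\bbY$ are Gaussian; (iii) compute the Stieltjes transform in the Gaussian case by writing the resolvent of $\bbB$ in block form and eliminating the auxiliary variables with the Schur complement formula.

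In stage~(i), I would start from the trace identity
\[
\Tr G_b(z)=(z-1)\bigl[\Tr G_{xy}((1-z)^2)+\Tr G_{yx}((1-z)^2)\bigr]
\]
and divide both sides by $p+q$. Since $\mathcal{S}_{xy}$ and $\mathcal{S}_{yx}$ share their non-zero eigenvalues, $\frac{1}{q}\Tr G_{yx}$ can be written in terms of $\frac{1}{p}\Tr G_{xy}$ up to an explicit rational term accounting for the zero eigenvalues. Hence convergence of $\frac{1}{p+q}\Tr G_b(z)$ is equivalent to convergence of $m(z)=\lim\frac{1}{p}\Tr G_{xy}(z)$, and the two Stieltjes transforms are related by an explicit algebraic change of variables $z\mapsto (1-z)^2$.

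In stage~(ii), I would work with $\bbB$ directly. Writing $\bbX=\boldsymbol{\Lambda}\bbY+\boldsymbol{\Gamma}\bbW$, I would apply a Lindeberg-type comparison, swapping one entry of $\bbW$ or $\bbY$ at a time for a Gaussian entry with matched first two moments, while tracking the effect on $\frac{1}{p+q}\Tr G_b(z)$ via repeated use of the resolvent expansion. The finite fourth moment in Assumption~\ref{assum2}, the operator-norm bounds in Assumption~\ref{assum3}, and the regime $c_1+c_2<1$ in Assumption~\ref{assum1} (which keeps $\bbX\bbX'$ and $\bbY\bbY'$ uniformly invertible and their inverse roots bounded) ensure that each swap contributes $O(n^{-3/2})$, so the total difference tends to zero. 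In stage~(iii), under Gaussian entries one exploits that $\boldsymbol{\Gamma}\bbW$ and $\boldsymbol{\Lambda}\bbY$ have independent rotationally invariant factors; writing the block form of $\bbB-z\bbI$ and computing the Schur complement of the $p\times p$ block yields a matrix equation whose normalized trace converges, by the deterministic-equivalent technology for sample covariances, to an integral against the limiting singular value distribution $H$ of $\boldsymbol{\Lambda}$. The auxiliary functions $G_1(z,m)$ and $G_2(z,m)$ in \eqref{CCALSDs} are exactly the coefficients produced when one reinjects the trace identity from stage~(i) into this Schur computation. Uniqueness in the class of Stieltjes transforms then follows from the standard argument: the right-hand side is a self-map of the upper half-plane and a contraction for $\operatorname{Im}z$ large, after which analytic continuation pins $m(z)$ down throughout $\mathbb{C}^+$.

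The main obstacle I expect is stage~(ii). The presence of the inverse square roots $(\bbX\bbX')^{-1/2}$ and $(\bbY\bbY')^{-1/2}$ makes $\bbB$ a non-smooth function of the data, so entry-by-entry Lindeberg swapping must be supplemented by a stability estimate for these roots under rank-one perturbations, obtained by expanding through an integral representation such as $A^{-1/2}=\frac{1}{\pi}\int_0^\infty t^{-1/2}(A+t)^{-1}dt$. This is where the bulk of the technical work in Sections~3--5 is likely to go, with the block-correlation tools of \cite{Bao2017test,bao2022spectral} providing the a priori bounds that make the comparison tractable, and the normal-case results of \cite{BHHJZ2020,hou2023spiked} supplying the concrete equation to match against.
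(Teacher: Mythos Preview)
Your three-stage outline is sound at the top level, but it diverges from the paper in two places that matter, and one of them is precisely the obstacle you flag.

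First, in stage~(ii) you propose to run the comparison argument on $\bbB$ itself and then fight the inverse square roots with the integral representation $A^{-1/2}=\frac{1}{\pi}\int_0^\infty t^{-1/2}(A+t)^{-1}\,dt$. The paper does not do this. It observes that the non-zero eigenvalues of $\bbB$ coincide with those of
\[
\bbH=\bbX'(\bbX\bbX')^{-1}\bbX+\bbY'(\bbY\bbY')^{-1}\bbY=\bbP_x+\bbP_y,
\]
and carries out the entire universality argument on $\bbH$. This eliminates the square roots: $\bbH$ is a rational function of the entries of $\bbW$ and $\bbY$, so the resolvent derivatives $\partial\Phi/\partial W_{ji}$, $\partial\Psi^l/\partial Y_{ji}$ are explicit sums of products of bounded-norm factors (Lemmas~\ref{lemma1}--\ref{lemma3}), and no stability estimate for inverse roots is needed. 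Your proposed route may be salvageable, but it is strictly harder, and the difficulty you anticipate is exactly what the paper's reduction to $\bbH$ is designed to avoid. Separately, the paper does not use entry-by-entry Lindeberg swapping; it uses the continuous interpolation $\bbW(s)=s^{1/2}\bbW+(1-s)^{1/2}\hat{\bbW}$ and differentiates in $s$ (and similarly in $t$ for $\bbY$), then pairs this with a martingale-difference argument (Section~\ref{s4}) to pass from $\mathbb{E}\tr$ to $\tr$.

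Second, in stage~(iii) you propose to derive \eqref{CCALSDs} from scratch by Schur-complementing $\bbB-z\bbI$ and invoking deterministic equivalents. The paper does not compute the equation this way. Once Gaussianity is available, it invokes the identity from \cite{BHHJZ2020} relating the SCC eigenvalues to those of a noncentral Fisher matrix, obtains the relation $m(z)=\frac{1}{1-z}+\frac{n-q}{q(1-z)^2}m_F\!\left(\frac{z(n-q)}{q(1-z)}\right)$, and then imports the LSD equation for the noncentral Fisher matrix from \cite{Zhang2023L}; the functions $G_1,G_2$ arise from substituting this change of variables into that equation. Your Schur-complement plan would amount to rederiving the noncentral Fisher LSD in disguise; it is not wrong, but it is a longer path and not the one the paper takes.
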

\begin{proof}
Here we roughly introduce the proof strategy. Under normal case, we can transform the SCC matrix to noncentral Fisher matrix, and through the study of the noncentral Fisher matrix, we have the corresponding results for the SCC matrix. Intuitively, the result of the LSD should be universality, now the restriction is that the relationship does not work under non-Gaussian case. We choose another transformation which holds under the general case, that is, the block correlation matrix under some appropriate alternative hypothesis. Given the result of the normality in \cite{BHHJZ2020}, we only need the universality of the LSD of the SCC matrix, that is, it is enough to consider that of the corresponding block correlation matrix $\bbB$. Notice the non-zero eigenvalues of $\bbB$ equal that of $\bbH$, where 
\begin{align*}
	\bbH=\bbX'(\bbX\bbX')^{-1}\bbX+\bbY'(\bbY\bbY')^{-1}\bbY:=\bbP_x+\bbP_y.
\end{align*}
So in subsequent sections, we mainly prove that the LSD of $\bbH$ is universal.
\end{proof}

	\section{Expectation convergence}\label{s3}
	In this section, we will show the difference of $\tr(\bbH-z)^{-1}$ ($\tr$ means normalized trace as $\frac{1}{n}\Tr$) between Gaussian and general case in the sense of expectation. There are two random parts contained in matrix $\bbH$, then we substitute them step by step. Here the substitute method is not Lindeberg’s method but another continuous method widely used in \cite{2015Independence, knowles2017anisotropic}.
	Write $\bbW(s)=s^{1/2}\bbW+(1-s)^{1/2}\hat{\bbW}$, where the matrix $\hat{\bbW}$ with standard normal entries is independent with $\bbW$. Let $\bbH_N$ and $\bbH(s)$ denote the matrix in which $\bbW$ is replaced by $\hat{\bbW}$ and $\bbW(s)$ respectively, but $\bbY$ is still under general distribution. Then we have 
	\begin{align*}
		&\mathbb{E}\tr(\bbH-z)^{-1}-\mathbb{E}\tr(\bbH_N-z)^{-1}=\int_0^1\frac{\partial}{\partial s}\mathbb{E}\tr(\bbH(s)-z)^{-1}ds\\
		=&-\int_0^1\mathbb{E}\tr(\bbH(s)-z)^{-2}\frac{\partial \bbP_{x(s)}}{\partial s}ds.
	\end{align*}
Similar to the derivative of a matrix by a scalar, we have 
	\begin{align*}
		&\frac{\partial\bbP_{x(s)}}{\partial s}=\frac{\partial [\bbX(s)]'\left(\bbX(s)[\bbX(s)]'\right)^{-1}\bbX(s)}{\partial s}\\
		=&\frac{1}{2}(\bbI-\bbP_{x(s)})\left(\frac{\bbW}{\sqrt{s}}\!-\!\frac{\hat{\bbW}}{\sqrt{1\!-\!s}}\right)'\boldsymbol{\Gamma}'\bbQ(s)+\frac{1}{2}\bbQ(s)'\boldsymbol{\Gamma}\left(\frac{\bbW}{\sqrt{s}}\!-\!\frac{\hat{\bbW}}{\sqrt{1\!-\!s}}\right)(\bbI-\bbP_{x(s)})
	\end{align*}
	where by definition $\bbQ(s)=(\bbX(s)\bbX(s)')^{-1}\bbX(s)$.
	Then we have
	\begin{align*}
		&\mathbb{E}\tr(\bbH(s)-z)^{-2}\frac{\partial \bbP_{x(s)}}{\partial s}\\
		=&\mathbb{E}\tr\left(\frac{\bbW}{\sqrt{s}}\!-\!\frac{\hat{\bbW}}{\sqrt{1\!-\!s}}\right)'\boldsymbol{\Gamma}'\bbQ(s)(\bbH(s)-z)^{-2}(\bbI-\bbP_{x(s)})\\
		=&\frac{1}{n}\mathbb{E}\left[\frac{1}{\sqrt{s}}\sum_{ij}W_{ji}\Phi_{ji}-\frac{1}{\sqrt{1-s}}\sum_{ij}\hat{W}_{ji}\Phi_{ji}\right]
	\end{align*}
	where 
	\begin{align}\label{phi}
		\Phi_{ji}:=[\boldsymbol{\Gamma}'\bbQ(s)(\bbH(s)-z)^{-2}(\bbI-\bbP_{x(s)})]_{ji}.
	\end{align}
	So 
	\begin{align*}
		\mathbb{E}\tr(\bbH\!-\!z)^{-1}\!-\!\mathbb{E}\tr(\bbH_N\!-\!z)^{-1}=\!-\!\int_0^1\frac{1}{n}\mathbb{E}\left[\frac{1}{\sqrt{s}}\sum_{ij}W_{ji}\Phi_{ji}\!-\!\frac{1}{\sqrt{1\!-\!s}}\sum_{ij}\hat{W}_{ji}\Phi_{ji}\right]ds.
	\end{align*}
	
	Then we need to calculate $\mathbb{E}\frac{1}{\sqrt{s}}\sum_{ij}W_{ji}\Phi_{ji}$ and $\mathbb{E}\frac{1}{\sqrt{1-s}}\sum_{ij}\hat{W}_{ji}\Phi_{ji}$ respectively. Here $\hat{W}_{ji}$ is from  standard normal distribution, so 
	\begin{align*}
		\mathbb{E}\sum_{ij}\hat{W}_{ji}\Phi_{ji}=\frac{1}{n}\sum_{ij}\mathbb{E}\frac{\partial \Phi_{ji}}{\partial \hat{W}_{ji}}
	\end{align*}
	exactly holds. By performing basic matrix calculus, we conclude the term with $\kappa_2$ in $\mathbb{E}\frac{1}{\sqrt{s}}\sum_{ij}W_{ji}\Phi_{ji}$ and $\mathbb{E}\frac{1}{\sqrt{1-s}}\sum_{ij}\hat{W}_{ji}\Phi_{ji}$ are exactly same, see more details for Lemma \ref{lemma1}. So the terms with third and higher order cumulants of $\mathbb{E}\frac{1}{\sqrt{s}}\sum_{ij}W_{ji}\Phi_{ji}$ are remained in their difference, next we bound it via the following formulas: 
	\begin{align*}
		&\frac{1}{n}\mathbb{E}\left[\frac{1}{\sqrt{s}}\sum_{ij}W_{ji}\Phi_{ji}-\frac{1}{\sqrt{1-s}}\sum_{ij}\hat{W}_{ji}\Phi_{ji}\right]=\frac{1}{n}\frac{1}{\sqrt{s}}\sum_{ij}\mathbb{E}e_1
	\end{align*}
	where the term $e_1$ results from cumulant expansion and  
	\begin{align*}
		e_1=O(1)\mathbb{E}[|W_{ji}|^3I_{\{|W_{ji}|>t\}}]\sup_{W_{ji}\in\mathbb{R}}\left|\frac{\partial^2 \Phi_{ji}}{\partial W^2_{ji}}\right|+O(1)\mathbb{E}|W_{ji}|^3\sup_{|W_{ji}|<t}\left|\frac{\partial^2 \Phi_{ji}}{\partial W^2_{ji}}\right|.
	\end{align*}
	Note that all terms in $\frac{\partial^2 \Phi_{ji}}{\partial W^2_{ji}}$ can be expressed in terms of the following factors
\begin{align*}
	&[\boldsymbol{\Gamma}'(\bbX(s)\bbX(s)')^{-1}\boldsymbol{\Gamma}]_{jj},[\boldsymbol{\Gamma}'\bbQ(s)G^b\bbQ'(s)\boldsymbol{\Gamma}]_{jj},[G^b]_{ii},[\bbP_{x(s)}G^b]_{ii},\\
	&[\bbP_{x(s)}G^b\bbP_{x(s)}]_{ii},[\boldsymbol{\Gamma}'\bbQ(s)G^b]_{ji}, [\boldsymbol{\Gamma}'\bbQ(s)G^b\bbP_{x(s)}]_{ji}
\end{align*}
where $b=0,1,2$ and $G=(\bbH(s)-z)^{-1}$ (we omit $s$ without aumbiguity).

According to the Assumption \ref{assum2}, we have $\|\boldsymbol{\Gamma}\|$ is bounded. For any fixed $z\in\mathbb{C}^{+}$, $\|G(z)\|$ has the trivial bound $\frac{1}{v}$. Here we need Lemma \ref{lemma2} to bound $\|\bbQ\|$ with high probability.  Meanwhile from the moment assumption on matrix entries, we have
\begin{align*}
	 \mathbb{E}[|W_{ji}|^3I_{\{|W_{ji}|>t\}}]=o\left(\frac{1}{n^{3/2}}\right) \mbox{ and }   \mathbb{E}|W_{ji}|^3=O\left(\frac{1}{n^{3/2}}\right) .
\end{align*}
Considering there is at least one off-diagonal element in term $\frac{\partial^2 \Phi_{ji}}{\partial W^2_{ji}}$, and choosing one of off-diagonal element denoted $A_{ij}$, we have 
	\begin{align*}
		&\frac{1}{n}\frac{1}{\sqrt{s}}\sum_{ij}\mathbb{E}[|W_{ji}|^3I_{\{|W_{ji}|>t\}}]\sup_{W_{ji}\in\mathbb{R}}\left|\frac{\partial^2 \Phi_{ji}}{\partial W^2_{ji}}\right|=\mathbb{E}\frac{1}{n}\sum_{ij}|A_{ij}|o(\frac{1}{n^{3/2}})\\
		\leq&\frac{1}{n}\sqrt{\mathbb{E}\sum_{ij}|A_{ij}|^2}\sqrt{np} \times o(\frac{1}{n^{3/2}})=o(\frac{1}{n}),\\
	\mbox{and  }	&\frac{1}{n}\frac{1}{\sqrt{s}}\sum_{ij}\mathbb{E}|W_{ji}|^3\sup_{|W_{ji}|<t}\left|\frac{\partial^2 \Phi_{ji}}{\partial W^2_{ji}}\right|=\mathbb{E}\frac{1}{n}\sum_{ij}|A_{ij}|O(\frac{1}{n^{3/2}})\\
		\leq&\frac{1}{n}\sqrt{\mathbb{E}\sum_{ij}|A_{ij}|^2}\sqrt{np} \times O(\frac{1}{n^{3/2}})=O(\frac{1}{n}).
	\end{align*}
	This implies $\mathbb{E}\tr(\bbH-z)^{-1}-\mathbb{E}\tr(\bbH_N-z)^{-1}=O(\frac{1}{n})$.
	Now we need to confirm that after replacing $\bbY$ with $\hat{\bbY}$, whose elements are with standard normal distribution, the limit of the quality $\mathbb{E}\tr(\bbH-z)^{-1}$ does not change, i.e., 
	\begin{align*}
		&\mathbb{E}\tr(\bbH-z)^{-1}-\mathbb{E}\tr(\bbH_N-z)^{-1}=-\int_0^1O(\frac{1}{n})ds=O(\frac{1}{n}),
	\end{align*}
	where the notion $\bbH$ is the matrix in which $\bbW$ with Gaussian case and $\bbY$ with general distribution. Similar to the previous interpolation step, we have
	\begin{align*}
		&\mathbb{E}\tr(\bbH-z)^{-1}-\mathbb{E}\tr(\bbH_N-z)^{-1}=\int_0^1\frac{\partial}{\partial t}\mathbb{E}\tr(\bbH(t)-z)^{-1}dt\\
		=&-\int_0^1\mathbb{E}\tr(\bbH(t)-z)^{-2}\left[\frac{\partial \bbP_{x(t)}}{\partial t}+\frac{\partial \bbP_{y(t)}}{\partial t}\right]dt.
	\end{align*}
Set
	\begin{align*}
		\bbY(t)=\sqrt{t}\bbY+\sqrt{1-t}\hat{\bbY},\ 	\bbX(t)=\boldsymbol{\Lambda}\bbY(t)+\boldsymbol{\Gamma}\hat{\bbW}, \  \bbU(t)=(\bbY(t)\bbY'(t))^{-1}\bbY(t),
	\end{align*}
	by the derivative calculation of a matrix by a scalar, we have 
	\begin{align*}
		&\frac{\partial\bbP_{x(t)}}{\partial t}=\frac{\partial \bbX'(t)(\bbX(t)\bbX'(t))^{-1}\bbX(t)}{\partial t}\\
		=&(\bbI\!-\!\bbP_{x(t)})\frac{1}{2}\left(\frac{1}{\sqrt{t}}\boldsymbol{\Lambda}\bbY\!-\!\frac{1}{\sqrt{1\!-\!t}}\boldsymbol{\Lambda}\hat{\bbY}\right)'\bbQ(t)\!+\!\frac{1}{2}\bbQ(t)'\left(\frac{1}{\sqrt{t}}\boldsymbol{\Lambda}\bbY\!-\!\frac{1}{\sqrt{1\!-\!t}}\boldsymbol{\Lambda}\hat{\bbY}\right)(\bbI\!-\!\bbP_{x(t)})
	\end{align*}
	and
	\begin{align*}
		&\frac{\partial\bbP_{y(t)}}{\partial t}=\frac{\partial \bbY'(t)(\bbY(t)\bbY'(t))^{-1}\bbY(t)}{\partial t}\\
		=&\frac{1}{2}(\bbI-\bbP_{y(t)})\left(\frac{\bbY}{\sqrt{t}}\!-\!\frac{\hat{\bbY}}{\sqrt{1\!-\!t}}\right)'\bbU(t)+\frac{1}{2}\bbU(t)'\left(\frac{\bbY}{\sqrt{t}}\!-\!\frac{\hat{\bbY}}{\sqrt{1\!-\!t}}\right)(\bbI-\bbP_{y(t)}).
	\end{align*}
	Then 
	\begin{align*}
		\mathbb{E}\tr(\bbH(t)-zI)^{-2}\left[\frac{\partial \bbP_{x(t)}}{\partial t}+\frac{\partial \bbP_{y(t)}}{\partial t}\right]
		=\frac{1}{n}\sum_{l=1}^2\mathbb{E}\left[\frac{1}{\sqrt{t}}\sum_{ij}Y_{ji}\Psi_{ji}^{l}\!-\!\frac{1}{\sqrt{1-t}}\sum_{ij}\hat{Y}_{ji}\Psi_{ji}^{l}\right]
	\end{align*}
	holds, where for $l=1,2$, $\Psi_{ji}^{1}=[\boldsymbol{\Lambda}'\bbQ(t)G^2(\bbI-\bbP_{x(t)})]_{ji},\ \Psi_{ji}^{2}=[\bbU(t)G^2(\bbI-\bbP_{y(t)})]_{ji}.$
	Similarly, we need to calculate $\mathbb{E}\frac{1}{\sqrt{t}}\sum_{ij}Y_{ji}\Psi^l_{ji}$ and $\mathbb{E}\frac{1}{\sqrt{1-t}}\sum_{ij}\hat{Y}_{ji}\Psi^l_{ji}$ respectively. Here $\hat{Y}_{ji}$ is standard normal distribution, so we have exactly 
	\begin{align*}
		\mathbb{E}\sum_{ij}\hat{Y}_{ji}\Psi^l_{ji}=\frac{1}{n}\sum_{ij}\mathbb{E}\frac{\partial \Psi^l_{ji}}{\partial \hat{Y}_{ji}}.
	\end{align*}
From the arguments in Lemma \ref{lemma3}, we conclude the term with $\kappa_2$ in $\mathbb{E}\frac{1}{\sqrt{t}}\sum_{ij}Y_{ji}\Psi^l_{ji}$ and $\mathbb{E}\frac{1}{\sqrt{1-t}}\sum_{ij}\hat{Y}_{ji}\Psi^l_{ji}$ are exactly same for $l=1,2$. So we need to calculate and bound the terms with third and later cumulants of $\mathbb{E}\frac{1}{\sqrt{t}}\sum_{ij}Y_{ji}\Psi^l_{ji}$:
	\begin{align*}
		&\sum_{l=1}^2\frac{1}{n}\mathbb{E}\left[\frac{1}{\sqrt{t}}\sum_{ij}Y_{ji}\Psi^l_{ji}-\frac{1}{\sqrt{1\!-\!t}}\sum_{ij}\hat{Y}_{ji}\Psi^l_{ji}\right]=\frac{1}{n}\frac{1}{\sqrt{t}}\sum_{l=1}^2\sum_{ij}\mathbb{E}e_2^l,\\
		&e_2^l=O(1)\mathbb{E}[|Y_{ji}|^3I_{\{|Y_{ji}|>t\}}]\sup_{Y_{ji}\in\mathbb{R}}\left|\frac{\partial^2 \Psi^l_{ji}}{\partial Y^2_{ji}}\right|+O(1)\mathbb{E}|Y_{ji}|^3\sup_{|Y_{ji}|<t}\left|\frac{\partial^2 \Psi^l_{ji}}{\partial Y^2_{ji}}\right|,
	\end{align*}
	where the term  $e_2^l$  results from cumulant expansion.

	We notice that the bound of $\left|\frac{\partial^2 \Psi^l_{ji}}{\partial Y^2_{ji}}\right|$ is only related with its property (at least one off-diagonal element factor), not with the specific form. Similarly,
	$$\frac{1}{n}\frac{1}{\sqrt{t}}\sum_{l=1}^2\sum_{ij}\mathbb{E}e_2^l=O(\frac{1}{n}).$$
	
	\section{Martingale difference decomposition}\label{s4}
	Define $\mathbb{E}_k=\mathbb{E}(\cdot|w_1,y_1,\dots,w_k,y_k)$, then we have 
	\begin{align*}
		&\tr (\bbH-z)^{-1}-\mathbb{E}\tr (\bbH-z)^{-1}=\sum_{k=1}^n(\mathbb{E}_k-\mathbb{E}_{k-1})\tr (\bbH-z)^{-1}\\
		=&\sum_{k=1}^n(\mathbb{E}_k-\mathbb{E}_{k-1})[\tr (\bbH-z)^{-1}-\tr (\bbH^{(k)}-z)^{-1}],
	\end{align*}
	where 
	\begin{align*}
		\bbH^{(k)}=\bbP_{x}^{(k)}+\bbP_{y}^{(k)}=(\bbX^{(k)})'[\bbX^{(k)}(\bbX^{(k)})']^{-1}\bbX^{(k)}+(\bbY^{(k)})'[\bbY^{(k)}(\bbY^{(k)})']^{-1}\bbY^{(k)},
	\end{align*}
	and \begin{align*}
		&\bbX=\bbX^{(k)}+x_ke_k',\ \bbY=\bbY^{(k)}+y_ke_k', \ \bbX^{(k)}=\boldsymbol{\Lambda}\bbY^{(k)}+\boldsymbol{\Gamma}\bbW^{(k)}.
	\end{align*}
	By the calculation in Lemma \ref{lemma4}, we obtain that 
	\begin{align*}
		\bbH-\bbH^{(k)}=\bbP_{x}-\bbP_{x}^{(k)}+\bbP_{y}-\bbP_{y}^{(k)}=\mbox{the sum of 8 rank 1 matrices}:=\boldsymbol{\alpha}_k\boldsymbol{\beta}_k.
	\end{align*}
	Let $(\boldsymbol{\alpha}_k)_{n\times 8}$, $(\boldsymbol{\beta}_k)_{8\times n}$ denote the sets of column, row vector respectively, see Lemma \ref{lemma4} for more details. And by the formula (0.7.4.1) of \cite{HJ2012M}, we have 
	\begin{align*}
		(\bbH\!-\!z)^{-1}\!-\!(\bbH^{(k)}\!-\!z)^{-1}=\!-\!(\bbH^{(k)}\!-\!z)^{-1}\boldsymbol{\alpha}_k(\bbI_{8}\!+\!\boldsymbol{\beta}_k(\bbH^{(k)}\!-\!z)^{-1}\boldsymbol{\alpha}_k)^{-1}\boldsymbol{\beta}_k(\bbH^{(k)}\!-\!z)^{-1}.
	\end{align*}
	There exists a deterministic matrix $\bbM$ w.r.t. random vectors $y_k,w_k$, which approximates $\bbI_{8}+\boldsymbol{\beta}_k(\bbH^{(k)}-z)^{-1}\boldsymbol{\alpha}_k$ in the expectation sense by Lemma \ref{lemma5}.
	For this matrix with fixed dimension, we have
	\begin{align*}
		&\bbI_{8}\!+\!\boldsymbol{\beta}_k(\bbH^{(k)}\!-\!z)^{-1}\boldsymbol{\alpha}_k\!=\!\bbM\!+\!\tilde{\bbM},\quad\mbox{and}\quad (\bbI_{8}\!+\!\boldsymbol{\beta}_k(\bbH^{(k)}\!-\!z)^{-1}\boldsymbol{\alpha}_k)^{-1}=\bbM^{-1}\!+\!\widetilde{\bbM^{-1}}
	\end{align*}
	where $\mathbb{E}|\tilde{\bbM}_{ij}|^8=O(\frac{1}{n})$ and $\mathbb{E}|(\widetilde{\bbM^{-1}})_{ij}|^4=O(\frac{1}{n})$.
	Then martingale decomposition term becomes 
	\begin{align*}
		&\tr(\bbH\!-\!z)^{-1}\!-\!\tr(\bbH^{(k)}\!-\!z)^{-1}\\
		=&\!-\!\frac{1}{n}\sum_{i,j=1}^{8}\bbM_{ij}^{-1}\boldsymbol{\beta}_{k,j}(\bbH^{(k)}\!-\!z)^{-2}\boldsymbol{\alpha}_{k,i}\!+\!\frac{1}{n}\sum_{i,j=1}^{8}\widetilde{\bbM^{-1}}_{ij}\boldsymbol{\beta}_{k,j}(\bbH^{(k)}\!-\!z)^{-2}\boldsymbol{\alpha}_{k,i}.
	\end{align*}
	
First we consider the remainder term
\begin{align*}
	&\sum_{n=1}^{\infty}\mathbb{P}\left(\left|\sum_{k=1}^n(\mathbb{E}_k-\mathbb{E}_{k-1})\frac{1}{n}\sum_{i,j=1}^{8}\widetilde{\bbM^{-1}}_{ij}\boldsymbol{\beta}_{k,j}(\bbH^{(k)}-z)^{-2}\boldsymbol{\alpha}_{k,i}\right|>\varepsilon\right)\\
	\leq&\sum_{n=1}^{\infty}\frac{1}{\varepsilon^2}\mathbb{E}\left|\sum_{k=1}^n(\mathbb{E}_k-\mathbb{E}_{k-1})\frac{1}{n}\sum_{i,j=1}^{8}\widetilde{\bbM^{-1}}_{ij}\boldsymbol{\beta}_{k,j}(\bbH^{(k)}-z)^{-2}\boldsymbol{\alpha}_{k,i}\right|^2\\
	\leq&\sum_{n=1}^{\infty}\frac{1}{\varepsilon^2}\mathbb{E}\sum_{k=1}^n\left|(\mathbb{E}_k-\mathbb{E}_{k-1})\frac{1}{n}\sum_{i,j=1}^{8}\widetilde{\bbM^{-1}}_{ij}\boldsymbol{\beta}_{k,j}(\bbH^{(k)}-z)^{-2}\boldsymbol{\alpha}_{k,i}\right|^2\\
	\leq&\sum_{n=1}^{\infty}\frac{1}{\varepsilon^2}\mathbb{E}\sum_{k=1}^n\frac{1}{n^2}\left|\sum_{i,j=1}^{8}\widetilde{\bbM^{-1}}_{ij}\boldsymbol{\beta}_{k,j}(\bbH^{(k)}-z)^{-2}\boldsymbol{\alpha}_{k,i}\right|^2\\
	\leq&\sum_{n=1}^{\infty}\frac{1}{\varepsilon^2}\mathbb{E}\sum_{k=1}^n\frac{1}{n^2}\sum_{i,j,i',j'=1}^{8}\left|\widetilde{\bbM^{-1}}_{ij}\widetilde{\bbM^{-1}}_{i'j'}\right|^2n^{2\delta}+\sum_{n=1}^{\infty}\frac{1}{n}\mathbb{P}(\mathcal{A}^c)\\
	\leq&\sum_{n=1}^{\infty}\frac{1}{\varepsilon^2}\sum_{k=1}^n\frac{n^{2\delta}}{n^2}\sum_{i,j,i',j'=1}^{8}\sqrt{\mathbb{E}|\widetilde{\bbM^{-1}}_{ij}|^4}\sqrt{\mathbb{E}|\widetilde{\bbM^{-1}}_{i'j'}|^4}+\sum_{n=1}^{\infty}\frac{1}{n}\mathbb{P}(\mathcal{A}^c)\\
	=&\sum_{n=1}^{\infty}O\left(\frac{1}{n^{2(1-\delta)}\varepsilon^2}\right)+\sum_{n=1}^{\infty}O\left(\frac{1}{n^{1+2\delta}}\right)<\infty,
\end{align*}
where the first two inequalities are due to Chebyshev's inequality and Burkholder inequality, respectively, and we define the event
 \begin{align*}
 \mathcal{A}\!=\!\{\mbox{for any }i,j,i',j', |\boldsymbol{\beta}_{k,j}(\bbH^{(k)}\!-\!zI)^{-2}\boldsymbol{\alpha}_{k,i}|<n^{\delta}\ \mbox{and } |\boldsymbol{\beta}_{k,j'}(\bbH^{(k)}\!-\!z)^{-2}\boldsymbol{\alpha}_{k,i'}|<n^{\delta} \}.
 \end{align*}
 The specific forms of $\boldsymbol{\beta}_{k,j}(\bbH^{(k)}-z)^{-2}\boldsymbol{\alpha}_{k,i}$ include 
\begin{align*}
	\chi'_kG^2\chi_k,\ \omega'_kG^2 \omega_k,\ \chi'_kG^2\omega_k,\ e'_kG^2 \chi_k ,\  e'_kG^2 \omega_k,\ e'_kG^2e_k.
\end{align*}
By Cauchy-Schwarz inequality and the formula (2.3) of \cite{BaiS04C}, we have 
\begin{align*}
\mathbb{P}(\mathcal{A}^c)\leq \frac{8^4}{n^{2\delta}}\mathbb{E}\left|\boldsymbol{\beta}_{k,j}(\bbH^{(k)}-z)^{-2}\boldsymbol{\alpha}_{k,i}\boldsymbol{\beta}_{k,j'}(\bbH^{(k)}-z)^{-2}\boldsymbol{\alpha}_{k,i'}\right|^2=O\left(\frac{1}{n^{2\delta}}\right).
\end{align*} 
The remainder term can be ignorable in the sense of almost sure. Now we consider the main term
	\begin{align*}
		&\delta_k=\frac{1}{n}(\mathbb{E}_k-\mathbb{E}_{k-1})\left[\sum_{i,j=1}^{8}\bbM_{ij}^{-1}\boldsymbol{\beta}_{k,j}(\bbH^{(k)}-z)^{-2}\boldsymbol{\alpha}_{k,i}\right]\\
		=&\frac{1}{n}(\mathbb{E}_k-\mathbb{E}_{k-1})\left[\sum_{i,j=1}^{8}\bbM_{ij}^{-1}\left(\boldsymbol{\beta}_{k,j}(\bbH^{(k)}-z)^{-2}\boldsymbol{\alpha}_{k,i}-\tr(\bbH^{(k)}-z)^{-2}\mathbb{E}\boldsymbol{\alpha}_{k,i}\boldsymbol{\beta}_{k,j}\right)\right]
	\end{align*}
	\begin{align*}
		\sum_{n=1}^{\infty}\mathbb{P}\left(\left|\sum_{k=1}^n\delta_k\right|>\varepsilon\right)\leq\sum_{n=1}^{\infty}\frac{1}{\varepsilon^2}\mathbb{E}\left|\sum_{k=1}^n\delta_k\right|^2\leq\sum_{n=1}^{\infty}\frac{1}{\varepsilon^2}\sum_{k=1}^n\mathbb{E}\left|\delta_k\right|^2=\sum_{n=1}^{\infty}O\left(\frac{1}{n^2\varepsilon^2}\right)<\infty.
	\end{align*}
	The first two inequality are due to Markov inequality and Burkholder inequality, respectively, and the next equality is due to the fact $\mathbb{E}|\delta_k|^2=O(\frac{1}{n^3})$. By Borel-Cantelli Lemma, we have  $\tr(\bbH-z)^{-1}-\mathbb{E}\tr(\bbH-z)^{-1}\overset{a.s.}{\to}0.$

	\section{The equality for Stieltjes transform of LSD}
	From the arguments in Section \ref{s3} and \ref{s4}, we have
	\begin{align*}
		&\tr(\bbH-zI)^{-1}-\tr(\bbH_N-zI)^{-1}\\
		=&\tr(\bbH-zI)^{-1}-\mathbb{E}\tr(\bbH-zI)^{-1}+\mathbb{E}\tr(\bbH-zI)^{-1}-\mathbb{E}\tr(\bbH_N-zI)^{-1}\\
		&+\mathbb{E}\tr(\bbH_N-zI)^{-1}-\tr(\bbH_N-zI)^{-1}\overset{a.s.}{\to}0.
	\end{align*}
	From Theorem \ref{th11}, we have for the random variables with general underlying distribution, 
	\begin{align*}
		\frac{1}{p+q}\Tr G_b(z)=&(z-1)(\frac{1}{p+q}\Tr G_{xy}((1-z)^2)+\frac{1}{p+q}\Tr G_{yx}((1-z)^2))\\
		=&(z-1)\left(\frac{q-p}{p+q}\frac{1}{-(1-z)^2}+\frac{2p}{p+q}\frac{1}{p}\Tr G_{xy}((1-z)^2)\right),
	\end{align*}
	where we use the fact that the spectrum of $(\bbX\bbX')^{-\frac{1}{2}}\bbX\bbP_y\bbX'(\bbX\bbX')^{-\frac{1}{2}}$ and $(\bbY\bbY')^{-\frac{1}{2}}\bbY\bbP_x\bbY'(\bbY\bbY')^{-\frac{1}{2}}$ differs by $|p-q|$ zero eigenvalues. 
	That is $f(\frac{1}{p}\Tr G_{xy}((1-z)^2))-f(\frac{1}{p}\Tr G_{xy}^{N}((1-z)^2))\overset{a.s.}{\to}0$, then we have $\frac{1}{p}\Tr G_{xy}((1-z)^2)-\frac{1}{p}\Tr G_{xy}^N((1-z)^2)\overset{a.s.}{\to}0$.
	
	From the relationship in \cite{BHHJZ2020}, under Gaussian assumption the Stieltjes transform of the SCC matrix and the corresponding noncentral Fisher matrix satisfies
	\begin{align}\label{mandmf}
		m(z)&=\int\frac{1}{t-z}dF^{SCC}(t)=\int\frac{1}{t-z}dF^{F}\left(\frac{(n-q)t}{q(1-t)}\right)\\
		&=\int\frac{1}{\frac{q}{n-q}x/(1+\frac{q}{n-q}x)-z}dF^{F}(x)\nonumber\\
		&=\int\frac{1+\frac{q}{n-q}x}{\frac{q}{n-q}x-z(1+\frac{q}{n-q}x)}dF^{F}(x)=\frac{n-q}{q(1-z)}\int\frac{1+\frac{q}{n-q}x}{x-\frac{z(n-q)}{q(1-z)}}dF^F(x)\nonumber\\
		&=\frac{n-q}{q(1-z)}\int\frac{\frac{q}{n-q}[x-\frac{z(n-q)}{q(1-z)}]+\frac{z}{1-z}+1}{x-\frac{z(n-q)}{q(1-z)}}dF^F(x)\nonumber\\
		&=\frac{1}{1\!-\!z}+\frac{n\!-\!q}{q(1\!-\!z)^2}\int\frac{1}{x\!-\!\frac{z(n\!-\!q)}{q(1\!-\!z)}}dF^F(x)=\frac{1}{1\!-\!z}+\frac{n\!-\!q}{q(1\!-\!z)^2}m_F\left(\frac{z(n\!-\!q)}{q(1\!-\!z)}\right). \nonumber
	\end{align}
	The second equality in the first line is due to the relationship between $F^{SCC}(\cdot)$ and $F^{F}(\cdot)$,
         \begin{align*}
         F^{SCC}(t)\!=\!\frac{1}{p}\sum_{i=1}^pI\{\lambda_i^{SCC}\leq t\}\!=\!\frac{1}{p}\sum_{i=1}^pI\left\{\frac{(n\!-\!q)\lambda_i^{SCC}}{q(1\!-\!\lambda_i^{SCC})}\leq \frac{(n\!-\!q)t}{q(1\!-\!t)}\right\}\!=\!F^{F}\left(\frac{(n\!-\!q)t}{q(1\!-\!t)}\right).
         \end{align*}
	The equality in the second line is due to the variable substitution 
	\begin{align*}
		x=\frac{(n-q)t}{q(1-t)}\Leftrightarrow t=\frac{qx}{n-q}/(1+\frac{q}{n-q}x).
	\end{align*}
	
	Notice $F^F(\cdot)$ is the LSD of noncentral Fisher matrix with noncentral parameter matrix $\boldsymbol{\Xi}=\frac{n}{q}\bbT(\frac{1}{n}\bbQ\bbQ)^{-1}\bbT'$, where the eigenvalues of $\bbT\bbT'$ is $\frac{(\lambda_i^{\boldsymbol{\Lambda}})^2}{1-(\lambda_i^{\boldsymbol{\Lambda}})^2}$, and the dimension of $\bbQ$ is $p\times n$. By the results of sample covariance matrix, we have the Stieltjes transform of the LSD of $\boldsymbol{\Xi}$ satisfies
	\begin{align*}
		m_{\boldsymbol{\Xi}}(z)=\int\frac{1}{t(1-\frac{p}{n}-\frac{p}{n}zm_{\boldsymbol{\Xi}}(z))-z}dH\left(\sqrt{\frac{\frac{q}{n}t}{1+\frac{q}{n}t}}\right)
	\end{align*}
	where we use the fact 
	\begin{align*}
		H(x)\!=\!\lim_{p\to\infty}\frac{1}{p}\sum_{i=1}^pI\{\lambda_i^{\boldsymbol{\Lambda}}\leq x\}\!=\!\lim_{p\to\infty}\frac{1}{p}\sum_{i=1}^pI\left\{\frac{n/q(\lambda_i^{\boldsymbol{\Lambda}})^2}{1\!-\!(\lambda_i^{\boldsymbol{\Lambda}})^2}\leq \frac{n/qx^2}{1\!-\!x^2}\right\}\!=\!H^{\frac{n}{q}TT'}\left(\frac{n/qx^2}{1\!-\!x^2}\right),
	\end{align*}
	and by variable substitution, we have 
	\begin{align*}
		H^{\frac{n}{q}TT'}(t)=H\left(\sqrt{\frac{\frac{q}{n}t}{1+\frac{q}{n}t}}\right).
	\end{align*}
	According to the main result in \cite{Zhang2023L}, we have the Stieltjes transform of $F^F(\cdot)$ with noncentral parameter matrix $\boldsymbol{\Xi}$ satisfies
	\begin{align*}
		m_F(z)\!=\!&\int\left[\frac{t}{1\!+\!(\frac{p}{q}\!+\!\frac{p}{n\!-\!q}z)m_F}\!+\!\frac{1\!-\!\frac{p}{q}}{1\!+\!\frac{p}{n\!-\!q}zm_F}\!-\!z\frac{1\!+\!(\frac{p}{q}\!+\!\frac{p}{n\!-\!q}z)m_F}{1\!+\!\frac{p}{n\!-\!q}zm_F}\right]^{-1}dH^{\boldsymbol{\Xi}}(t)\\
		\frac{m_F(z)}{1\!+\!(\frac{p}{q}\!+\!\frac{p}{n-q}z)m_F}\!=\!&\int\left[t\!+\!\frac{(1\!-\!\frac{p}{q})[1\!+\!(\frac{p}{q}\!+\!\frac{p}{n\!-\!q}z)m_F]}{1\!+\!\frac{p}{n\!-\!q}zm_F}\!-\!z\frac{[1\!+\!(\frac{p}{q}\!+\!\frac{p}{n\!-\!q}z)m_F]^2}{1\!+\!\frac{p}{n\!-\!q}zm_F}\right]^{-1}dH^{\boldsymbol{\Xi}}(t)\\
		=&m_{\boldsymbol{\Xi}}\left(z\frac{[1+(\frac{p}{q}+\frac{p}{n-q}z)m_F]^2}{1+\frac{p}{n-q}zm_F}-\frac{(1-\frac{p}{q})[1+(\frac{p}{q}+\frac{p}{n-q}z)m_F]}{1+\frac{p}{n-q}zm_F}\right).
	\end{align*}
	That means the pair 
	\begin{align*}
		\left(z\frac{[1+(\frac{p}{q}+\frac{p}{n-q}z)m_F]^2}{1+\frac{p}{n-q}zm_F}-\frac{(1-\frac{p}{q})[1+(\frac{p}{q}+\frac{p}{n-q}z)m_F]}{1+\frac{p}{n-q}zm_F},\quad \frac{m_F(z)}{1+(\frac{p}{q}+\frac{p}{n-q}z)m_F}\right)
	\end{align*}
	satisfies the equation of the pair $(a,b)$
	\begin{align}\label{equationab}
		b=\int\frac{1}{t(1-\frac{p}{n}-\frac{p}{n}ab)-a}dH\left(\sqrt{\frac{\frac{q}{n}t}{1+\frac{q}{n}t}}\right).
	\end{align}
	According to the relationship \eqref{mandmf} between $m$ and $m_F$, we have the pair 
	\begin{align*}
		\frac{1\!+\!\frac{p}{n\!-\!q}[(1\!-\!z)m\!-\!1]}{1\!+\!\frac{pz}{n\!-\!q}[(1\!-\!z)m\!-\!1]}\left\{\frac{z(n\!-\!q)}{q(1\!-\!z)}\left[1\!+\!\frac{p[(1\!-\!z)m\!-\!1]}{n\!-\!q}\right]\!-\!\left(1\!-\!\frac{p}{q}\right)\right\}
	\end{align*}
	and
	\begin{align*}
		\frac{\frac{q(1\!-\!z)}{n\!-\!q}[(1\!-\!z)m\!-\!1]}{1\!+\!\frac{p}{n\!-\!q}[(1\!-\!z)m\!-\!1]}
	\end{align*}
	satisfies the same equation of the pair $(a,b)$.

	Then substituting the above pair in formula \eqref{equationab}, we have 
	\begin{align*}
		\frac{\frac{q(1-z)}{n-q}[(1-z)m-1]}{1+\frac{p}{n-q}[(1-z)m-1]}=\int\frac{1}{tG_1(z,m)-G_2(z,m)}dH\left(\sqrt{\frac{\frac{q}{n}t}{1\!+\!\frac{q}{n}t}}\right)
	\end{align*}
	\begin{align}\label{CCALSD}
		\Leftrightarrow G_1(z,m)\frac{\frac{q(1-z)}{n-q}[(1-z)m-1]}{1+\frac{p}{n-q}[(1-z)m-1]}=\int_0^1\left[\frac{n}{q}\frac{x^2}{1-x^2}-\frac{G_2(z,m)}{G_1(z,m)}\right]^{-1}dH(x)
	\end{align}
	where 
	\begin{align*}
		G_1(z,m)\!=&1\!-\!\frac{p}{n}\!-\!\frac{p}{n}\frac{\frac{q(1\!-\!z)}{n\!-\!q}[(1\!-\!z)m\!-\!1]}{1\!+\!\frac{pz}{n\!-\!q}[(1\!-\!z)m\!-\!1]}\left\{\frac{z(n\!-\!q)}{q(1\!-\!z)}\left[1\!+\!\frac{p[(1\!-\!z)m\!-\!1]}{n\!-\!q}\right]-\left(1\!-\!\frac{p}{q}\right)\right\}\\
		G_2(z,m)\!=&\frac{1\!+\!\frac{p}{n\!-\!q}[(1\!-\!z)m\!-\!1]}{1\!+\!\frac{pz}{n\!-\!q}[(1\!-\!\!z)m\!-\!1]}\left\{\frac{z(n\!-\!q)}{q(1\!-\!z)}\left[1\!+\!\frac{p}{n\!-\!q}[(1\!-\!z)m\!-\!1]\right]\!-\!\left(1\!-\!\frac{p}{q}\right)\right\}.
	\end{align*}
	The Stieltjes transform of the LSD of the SCC matrix satisfies the equation \eqref{CCALSD}.

	According to the relationship between $\Tr G_b$ and $\Tr G_{xy}$ in Theorem \ref{th11}, we have the pair 
	\begin{align*}
     \left((1-z)^2,\frac{(c_1+c_2)m_b+\frac{c_1-c_2}{1-z}}{2c_1(z-1)}\right)
	\end{align*} satisfies the equation \eqref{CCALSD} as $(z,m)$, where $m_b$ means the limit of $\frac{1}{p+q}\Tr G_b$, the Stieltjes transform of the LSD of the block correlation matrix $\bbB$.
	
	Moreover, we choose Gamma distribution samples with dimension setting: $(p,q,n)=(1000,3000,8000)$, to give a histogram for comparison between the ESD and the LSD of $\mathcal{S}_{xy}$. In first subfigure of Figure \ref{fig}, we consider the fixed rank alternative case, that is the same with null case except some outliers. In second subfigure of Figure \ref{fig}, we consider the case of rank $p/2$, and there is an interesting phenomenon because of the spectrum gap of $H$, where more details and further results need more exploration of the properties of LSD. In last subfigure of Figure \ref{fig}, we consider the full rank alternative case, and they all fit well with our theoretical results.
\begin{figure}[h]
	\begin{minipage}{0.32\linewidth}
		\vspace{3pt}
		\centerline{\includegraphics[width=\textwidth]{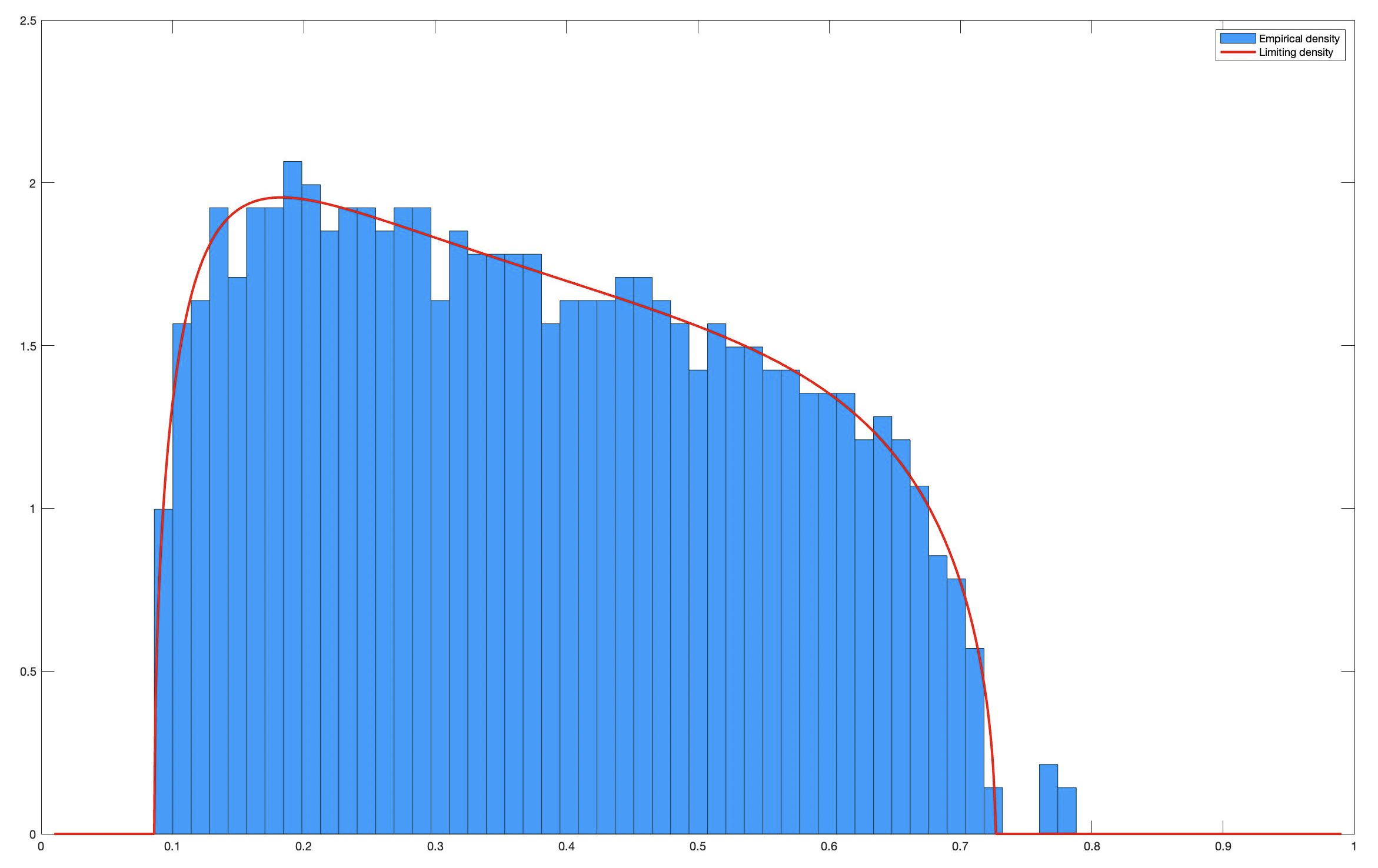}}
		\centerline{$\boldsymbol{\Lambda}\boldsymbol{\Lambda}'=\frac{1}{2}\bbI_5\oplus 0\bbI_{p-5}$}
	\end{minipage}
	\begin{minipage}{0.32\linewidth}
		\vspace{3pt}
		\centerline{\includegraphics[width=\textwidth]{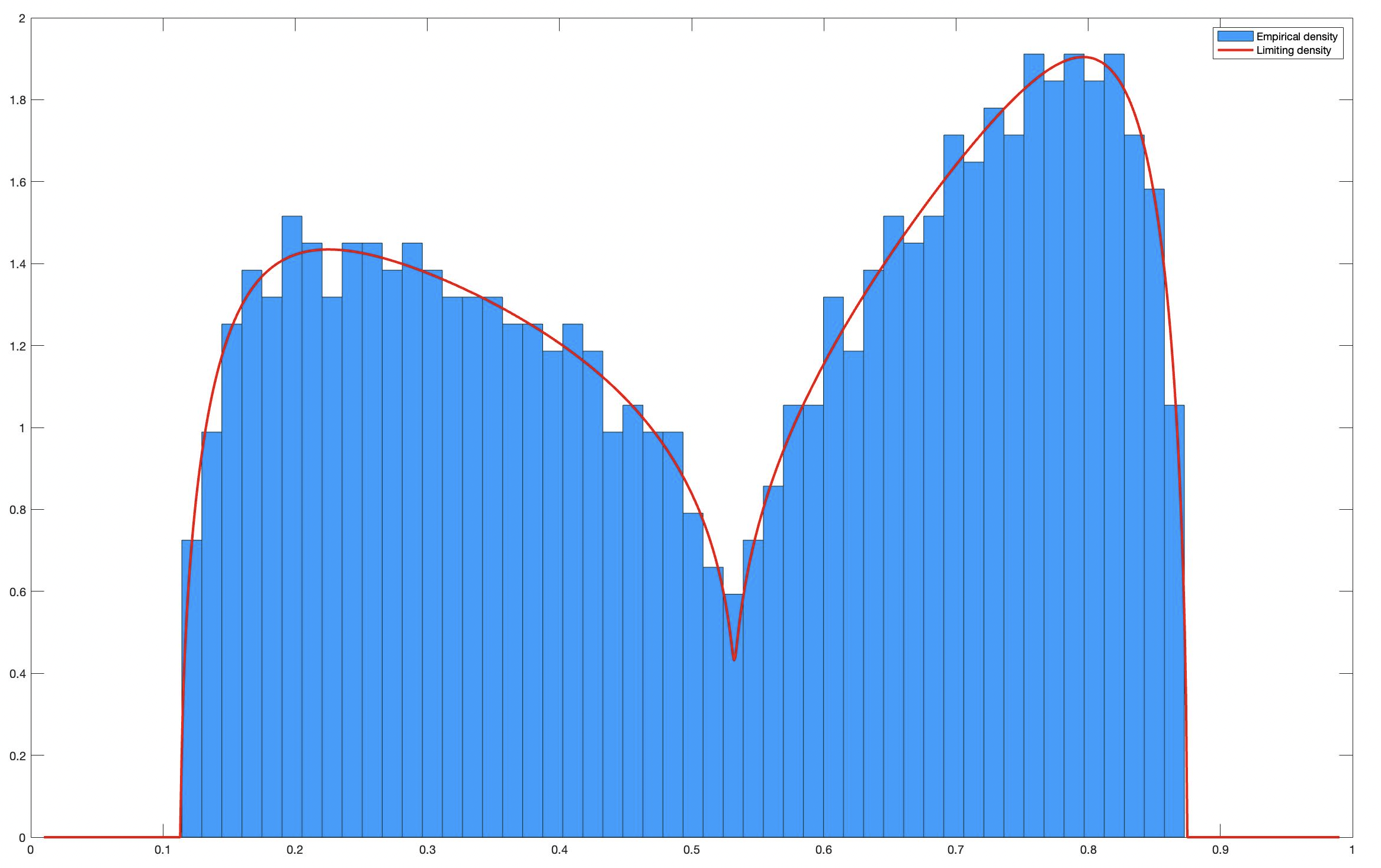}}
		\centerline{$\boldsymbol{\Lambda}\boldsymbol{\Lambda}'=\frac{1}{2}\bbI_{p/2}\oplus 0\bbI_{p/2}$}
	\end{minipage}
	\begin{minipage}{0.32\linewidth}
		\vspace{3pt}
		\centerline{\includegraphics[width=\textwidth]{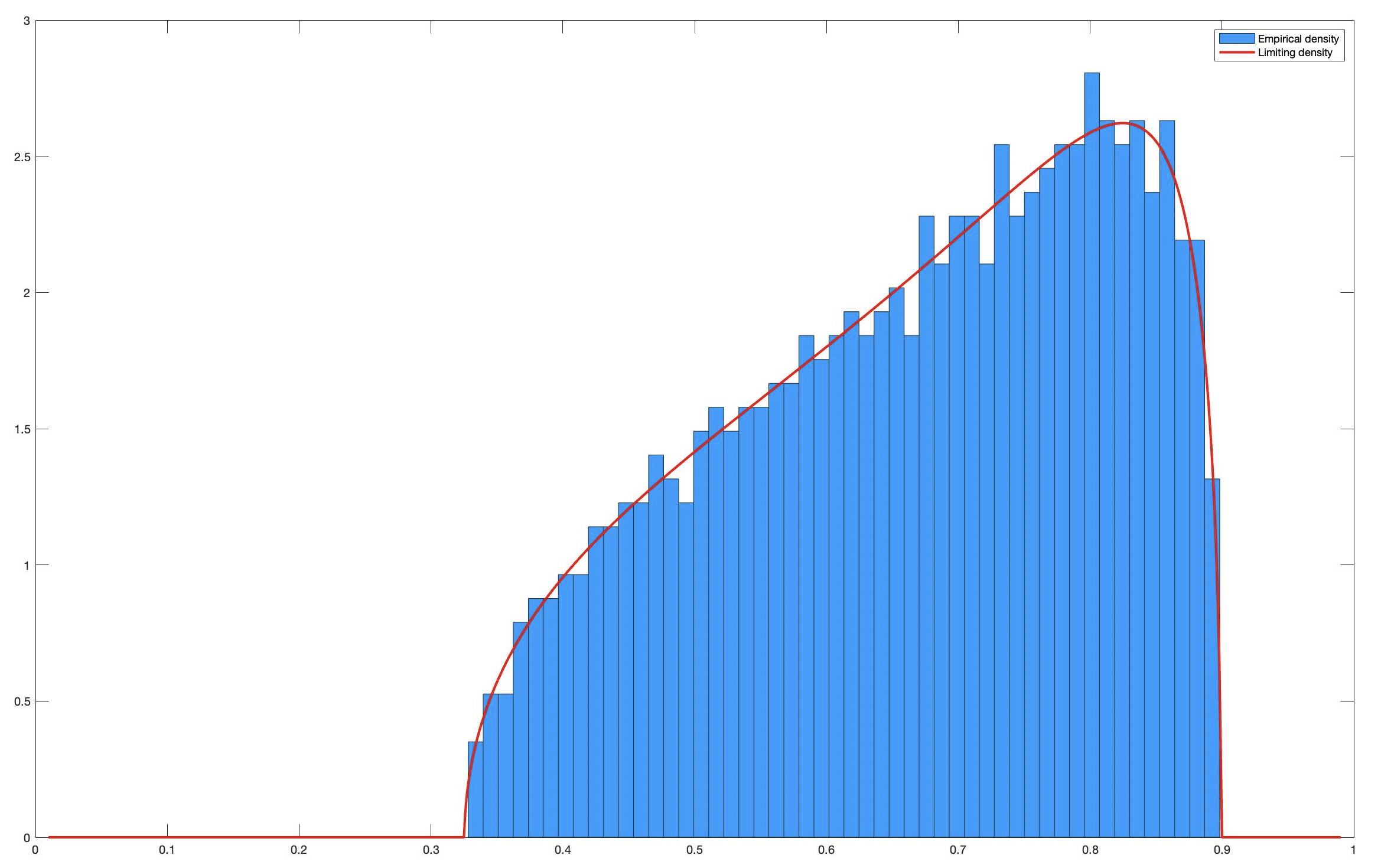}}
		\centerline{$\boldsymbol{\Lambda}\boldsymbol{\Lambda}'=\frac{1}{2}\bbI_p$}
	\end{minipage}
	\caption{LSD of SCC matrix with underlying distribution G(4,2).}
	\label{fig}
\end{figure}

\section{Some Lemmas}

\begin{lemma}\label{lemma0}
Consider the real matrix $\bbB$ and $\bbA$ have the following structure:
\begin{align*}
\bbB=\begin{pmatrix} \bbI_p, &\bbA\\
\bbA', &\bbI_q \end{pmatrix},
\end{align*}
and denote the $\min\{p+q\}$ largest singular values of $\bbA$ by $l_1\geq l_2\geq \dots \geq l_{\min\{p,q\}}$, then the $p+q$ eigenvalues of $\bbB$ will consist of 
\begin{align*}
\{\underbrace{1+l_1,\dots,1+l_{\min\{p,q\}}}_{\min\{p,q\}},\underbrace{1,\dots,1}_{\max\{p,q\}-\min\{p,q\}},\underbrace{1-l_{\min\{p,q\}},\dots,1-l_1}_{\min\{p,q\}}\}.
\end{align*} 
\end{lemma}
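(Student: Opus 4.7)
The plan is to diagonalize $\bbA$ via its singular value decomposition and then recognize the resulting block matrix as a direct sum of tiny pieces whose eigenvalues are immediate. Write $\bbA = \bbU \bbD \bbV'$ with $\bbU \in \mathbb{R}^{p\times p}$ and $\bbV \in \mathbb{R}^{q\times q}$ orthogonal and $\bbD \in \mathbb{R}^{p\times q}$ the ``rectangular diagonal'' matrix carrying $l_1 \geq \cdots \geq l_{\min\{p,q\}}$. The block-orthogonal conjugation
\begin{align*}
\begin{pmatrix} \bbU' & \boldsymbol{0}\\ \boldsymbol{0} & \bbV'\end{pmatrix}\bbB\begin{pmatrix} \bbU & \boldsymbol{0}\\ \boldsymbol{0} & \bbV\end{pmatrix}=\begin{pmatrix}\bbI_p & \bbD\\ \bbD' & \bbI_q\end{pmatrix}
\end{align*}
preserves the spectrum, so it suffices to compute the eigenvalues of the right-hand side.

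Next I would apply a permutation that interleaves the two index blocks. Assume WLOG $p \leq q$ (the opposite case is symmetric). After relabeling the basis vectors as $(e_1,e_{p+1},e_2,e_{p+2},\dots,e_p,e_{2p},e_{2p+1},\dots,e_{p+q})$, the similarity takes the reduced matrix to the block-diagonal form
\begin{align*}
\left(\bigoplus_{i=1}^{p}\begin{pmatrix}1 & l_i\\ l_i & 1\end{pmatrix}\right)\oplus \bbI_{q-p}.
\end{align*}
This is the key structural step: every singular value $l_i$ of $\bbA$ couples a single row from the top block to a single row from the bottom block, while the $q-p$ unmatched rows of the bottom block contribute only diagonal $1$'s.

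Each $2\times 2$ block has characteristic polynomial $(1-\lambda)^2-l_i^2$, yielding eigenvalues $1+l_i$ and $1-l_i$. Collecting contributions from all blocks produces exactly $\min\{p,q\}$ eigenvalues of the form $1+l_i$, $\min\{p,q\}$ of the form $1-l_i$, and $\max\{p,q\}-\min\{p,q\}$ eigenvalues equal to $1$, accounting for all $p+q$ eigenvalues of $\bbB$. This matches the asserted multiset.

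I do not foresee a real obstacle: everything rests on the SVD and a permutation similarity. The only item requiring a touch of care is the bookkeeping when $p \neq q$, ensuring that the unmatched $|p-q|$ rows produce exactly the stated multiplicity of the eigenvalue $1$; writing out the permutation explicitly handles this cleanly.
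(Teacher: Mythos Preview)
Your proof is correct and is in fact cleaner than the paper's. Both start from the SVD $\bbA=\bbU\bbD\bbV'$, but the paper proceeds by exhibiting the eigenvectors of $\bbB$ one family at a time: it checks directly that $(u_i',v_i')'$ is an eigenvector for $1+l_i$, then argues the $1-l_i$ eigenvalues via orthogonality of eigenvectors associated with distinct eigenvalues, and finally handles the $|p-q|$ remaining vectors $(0',v_{p+i}')'$. Your route replaces all of that by the single block-orthogonal conjugation by $\mathrm{diag}(\bbU,\bbV)$ followed by a permutation similarity, which reduces $\bbB$ to a direct sum of $2\times2$ blocks $\begin{pmatrix}1&l_i\\ l_i&1\end{pmatrix}$ plus $\bbI_{|p-q|}$. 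This is more elementary and avoids the somewhat delicate part of the paper's argument (showing that the $1-l_i$ eigenvectors actually exist and span the right subspace). The paper's approach does give the eigenvectors of $\bbB$ explicitly in terms of the singular vectors of $\bbA$, which could be useful elsewhere, but for the stated lemma your argument is tighter.
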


\begin{proof}
Without loss of generality, we assume $p<q$. Set singular value decomposition (SVD) for $\bbA_{p\times q}=\bbU \bbL\bbV'$, where $\bbU_{p\times p}=(u_1,\dots,u_p)$, $\bbV_{q\times q}=(v_1,\dots,v_q)$, $\bbU'\bbU=\bbI_p$, $\bbV'\bbV=\bbI_q$. 
Denote the eigenvalue of $\bbB$ by $\lambda$.

First, we will show $\{1+l_1,\dots, 1+l_i, \dots,1+l_p\}$ are the eigenvalues of $\bbB$ with eigenvectors $\{(u_1',v_1')',\dots,(u_i',v_i')',\dots,(u_p',v_p')'\}$.  According to SVD for $\bbA$, we have $\bbA v_i=\sum_{j=1}^p l_ju_jv_j'v_i=l_iu_i$ and $\bbA' u_i=\sum_{j=1}^p l_jv_ju_j'u_i=l_iv_i$,  and then 
\begin{align*}
\begin{pmatrix} \bbI_p, & \bbA\\    \bbA', &\bbI_q \end{pmatrix}
\begin{pmatrix} u_i \\ v_i  \end{pmatrix}=
\begin{pmatrix} \bbI_p u_i+\bbA v_i \\ \bbA'u_i +\bbI_q v_i\end{pmatrix}=
\begin{pmatrix} (1+l_i) u_i \\ (1+l_i)v_i\end{pmatrix}=
(1+l_i)\begin{pmatrix} u_i \\ v_i  \end{pmatrix}.
\end{align*}

Second, we will show $\{1-l_1,\dots, 1-l_i, \dots,1-l_p\}$ are the eigenvalues of $\bbB$ with eigenvectors $\{(\tilde{u}_{p+1}',\tilde{v}_{p+1}')',\dots,(\tilde{u}_{p+i}',\tilde{v}_{p+i}')',\dots,(\tilde{u}_{2p}',\tilde{v}_{2p}')'\}$. According to the definition of eigenvalue and eigenvector, we have 
\begin{align*}
\begin{pmatrix} \bbI_p, & \bbA\\    \bbA', &\bbI_q \end{pmatrix}
\begin{pmatrix} u \\ v  \end{pmatrix}=\begin{pmatrix}  u+\bbA v \\ \bbA'u+v\end{pmatrix}=
\lambda \begin{pmatrix} u\\ v  \end{pmatrix}.
\end{align*}
Because $(u',v')'$ and $(u_{i}',v_{i}')'$ are the eigenvectors of $\bbB$ with different eigenvalues $\lambda$ and $1+l_i$, they are orthogonal, i.e., $u_i'u+v_i'v=0$ for any $i=1, \dots ,p$. The above equals to 
\begin{equation}\label{eq31}
\begin{pmatrix} (1-\lambda)u+\bbA v \\  \bbA'u+(1-\lambda)v\end{pmatrix}=0\ \Rightarrow 
\left\{\begin{array}{lr} (1-\lambda)u_i'u+u_i'\bbA v=0\\
v_i'\bbA' u+(1-\lambda)v_i' v=0 \end{array}. \right.
\end{equation}
We have 
\begin{align*}
u_i'\bbA=(\bbA' u_i)'=(l_iv_i)'=l_iv_i',\ \ v_i'\bbA' =(\bbA v_i)'=(l_iu_i)'=l_iu_i'.
\end{align*}
Then the equation set \eqref{eq31} becames
\begin{equation*}
\left\{ {\begin{array}{l}
{(1-\lambda)[u_i'u+v_i'v]+[l_i-(1-\lambda)] v_i'v=0}\\
{(1-\lambda)[u_i'u+v_i'v]+[l_i-(1-\lambda)] u_i' u=0}
 \end{array}} \right.
\Rightarrow \left\{ {
\begin{array}{l}
{[l_i-(1-\lambda)] v_i'v=0} \\
{[l_i-(1-\lambda)] u_i'u=0 }
\end{array}} \right. .
\end{equation*}
We confirm $\lambda=1-l_i$.  If not, the dimension of the space spanned by $(u_1,u_2,\dots,u_p)$ equals to $p$, there is no $u$ that satisfies $u_i'u=0$ for all $i$.  When the index $i$ varies, the corresponding eigenvector $(u,v)$ changes accordingly, and will fill the orthogonal subspace of the subspace spanned by $\{(u_1',v_1')',\dots,(u_i',v_i')',$ $\dots, (u_p',v_p')'\}$.

Last, we will show the remaining eigenvalues of $\bbB$ are all $1$. There are $q-p$ $v$ such that each is orthogonal to $\{v_1,\dots,v_p\}$, because there are $q$ orthogonal columns in $\bbV$.  Extend the base $\{v_{p+1}, v_{p+2},\dots,v_{q}\}$ in $q$-dimensional linear space to the base $\{(0',v_{p+1}')',(0',v_{p+2}')', \dots,(0',v_q')'\}$ in $p+q$-dimensional linear space. 
Now, $\bbA v_{p+i}=\sum_{j=1}^pl_ju_jv_j'v_{p+i}=0$  and $\bbA'u_{p+i}=0$. That is,
\begin{align*}
\begin{pmatrix} \bbI_p, & \bbA\\    \bbA', &\bbI_q \end{pmatrix}
\begin{pmatrix} 0 \\ v_{p+i}  \end{pmatrix}=\begin{pmatrix}  \bbA v_{p+i} \\ v_{p+i}\end{pmatrix}=
1\begin{pmatrix} 0 \\ v_{p+i}  \end{pmatrix}.
\end{align*}
\end{proof}

\begin{lemma}\label{lemma1}
Under the assumptions and notations of Theorem \ref{th12}, we have
\begin{align*}
\frac{1}{\sqrt{1-s}}\frac{\partial \Phi_{ji}}{\partial \hat{W}_{ji}}=\frac{1}{\sqrt{s}}\frac{\partial \Phi_{ji}}{\partial W_{ji}}.
\end{align*}
\end{lemma}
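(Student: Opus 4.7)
The plan is to observe that $\Phi_{ji}$ is a function of the Gaussian/non-Gaussian matrices $\bbW$ and $\hat{\bbW}$ only through the single combination $\bbW(s)=\sqrt{s}\,\bbW+\sqrt{1-s}\,\hat{\bbW}$. Indeed, $\bbX(s)=\boldsymbol{\Lambda}\bbY+\boldsymbol{\Gamma}\bbW(s)$, and then every block appearing inside $\Phi_{ji}$—namely $\bbQ(s)=(\bbX(s)\bbX(s)')^{-1}\bbX(s)$, $\bbP_{x(s)}=\bbX(s)'(\bbX(s)\bbX(s)')^{-1}\bbX(s)$, and $\bbH(s)=\bbP_{x(s)}+\bbP_{y}$—is built only from $\bbX(s)$ (and from $\bbY$, which is independent of the interpolation variables). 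Hence the map $(\bbW,\hat{\bbW})\mapsto \Phi_{ji}$ factors as $(\bbW,\hat{\bbW})\mapsto \bbW(s)\mapsto \Phi_{ji}$.

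With this factorization, the claim reduces to a one-line chain rule. Since $\partial [\bbW(s)]_{ab}/\partial W_{ji}=\sqrt{s}\,\delta_{aj}\delta_{bi}$ and $\partial [\bbW(s)]_{ab}/\partial \hat{W}_{ji}=\sqrt{1-s}\,\delta_{aj}\delta_{bi}$, one obtains
\begin{align*}
\frac{\partial \Phi_{ji}}{\partial W_{ji}}=\sqrt{s}\,\frac{\partial \Phi_{ji}}{\partial [\bbW(s)]_{ji}},\qquad
\frac{\partial \Phi_{ji}}{\partial \hat{W}_{ji}}=\sqrt{1-s}\,\frac{\partial \Phi_{ji}}{\partial [\bbW(s)]_{ji}}.
\end{align*}
Dividing the first equation by $\sqrt{s}$ and the second by $\sqrt{1-s}$ yields the same quantity $\partial \Phi_{ji}/\partial [\bbW(s)]_{ji}$, which gives the desired identity.

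There is no real obstacle beyond checking that all building blocks of $\Phi_{ji}$ truly depend on the interpolated variables only through $\bbW(s)$; this is immediate from the definitions and from the fact that $\bbY$ (and hence $\bbP_y$) is held fixed during this interpolation step. The resulting identity is exactly what is needed in Section~\ref{s3} to conclude that the $\kappa_2$-contributions in $\mathbb{E}\,\tfrac{1}{\sqrt{s}}\sum_{ij}W_{ji}\Phi_{ji}$ and $\mathbb{E}\,\tfrac{1}{\sqrt{1-s}}\sum_{ij}\hat{W}_{ji}\Phi_{ji}$ coincide, so that only the higher-order cumulant contributions remain to be controlled.
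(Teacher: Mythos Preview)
Your proof is correct and more streamlined than the paper's. The paper proceeds by explicit matrix calculus: it expands $\partial \Phi_{ji}/\partial \hat{W}_{ji}$ via the chain rule through $\bbQ(s)$ and $\bbP_{x(s)}$, computes the building-block derivatives
\[
\frac{\partial \bbX(s)}{\partial \hat{W}_{ji}}=\sqrt{1-s}\,\boldsymbol{\Gamma}1_{ji},\qquad
\frac{\partial \bbX(s)}{\partial W_{ji}}=\sqrt{s}\,\boldsymbol{\Gamma}1_{ji},
\]
propagates these through $\partial \bbQ(s)/\partial(\cdot)$ and $\partial \bbP_{x(s)}/\partial(\cdot)$, and then observes that every intermediate derivative with respect to $\hat{W}_{ji}$ differs from its $W_{ji}$ counterpart only by the prefactor $\sqrt{1-s}$ versus $\sqrt{s}$. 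Your argument short-circuits all of this by noting that the dependence on $(\bbW,\hat{\bbW})$ factors through $\bbW(s)$, so a single application of the chain rule gives the identity immediately. The paper's explicit computation, while longer, has the side benefit of producing concrete formulas for the derivative pieces, which are reused in Section~\ref{s3} to list and bound the factors appearing in $\partial^2 \Phi_{ji}/\partial W_{ji}^2$; your approach proves the lemma cleanly but those formulas would still need to be written down separately when the higher-order error terms are controlled.
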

\begin{proof} According to the notation \eqref{phi} and chain rule, we have 
\begin{align*}
	&\frac{\partial \Phi_{ji}}{\partial \hat{W}_{ji}}=\frac{\partial[\boldsymbol{\Gamma}'\bbQ(s)(\bbH(s)-zI)^{-2}(\bbI-\bbP_{x(s)})]_{ji}}{\partial \hat{W}_{ji}},\\
	=&\left[\boldsymbol{\Gamma}'\frac{\partial\bbQ(s)}{\partial \hat{W}_{ji}}G^2(s)(\bbI-\bbP_{x(s)})-\boldsymbol{\Gamma}'\bbQ(s)G(s)\frac{\partial \bbP_{x(s)}}{\partial \hat{W}_{ji}}G^2(s)(\bbI-\bbP_{x(s)})\right. \\
	&-\left. \boldsymbol{\Gamma}'\bbQ(s)G^2(s)\frac{\partial \bbP_{x(s)}}{\partial \hat{W}_{ji}}G(s)(\bbI-\bbP_{x(s)})-\boldsymbol{\Gamma}'\bbQ(s)G^2(s)\frac{\partial \bbP_{x(s)}}{\partial \hat{W}_{ji}}\right]_{ji}.
	\end{align*}
	To calculate the above derivatives, we need some matrix derivative results
	\begin{align*}
		\frac{\partial \bbX(s)}{\partial \hat{W}_{ji}}&=\sqrt{1\!-\!s}\boldsymbol{\Gamma}1_{ji},\  \frac{\partial \bbX(s)'}{\partial \hat{W}_{ji}}=\sqrt{1\!-\!s}1_{ij}\boldsymbol{\Gamma}',\  \frac{\partial \bbX(s)}{\partial {W}_{ji}}=\sqrt{s}\boldsymbol{\Gamma}1_{ji},\ \frac{\partial \bbX(s)'}{\partial {W}_{ji}}=\sqrt{s}1_{ij}\boldsymbol{\Gamma}',\\
		\frac{\partial \bbQ(s)}{\partial \hat{W}_{ji}}&=\!-\!(\bbX(s)\bbX(s)')^{-1}\left(\frac{\partial \bbX(s)}{\partial \hat{W}_{ji}}\bbX(s)'\!+\!\bbX(s)\frac{\partial \bbX(s)'}{\partial \hat{W}_{ji}}\right)\bbQ(s)\!+\!(\bbX(s)\bbX(s)')^{-1}\frac{\partial \bbX(s)}{\partial \hat{W}_{ji}}\\
		&=\sqrt{1-s}[(\bbX(s)\bbX(s)')^{-1}\boldsymbol{\Gamma}1_{ji}(\bbI-\bbP_{x(s)})-\bbQ(s)1_{ij}\boldsymbol{\Gamma}'\bbQ(s)],\\
		\frac{\partial \bbQ(s)}{\partial {W}_{ji}}&=\sqrt{s}[(\bbX(s)\bbX(s)')^{-1}\boldsymbol{\Gamma}1_{ji}(\bbI-\bbP_{x(s)})-\bbQ(s)1_{ij}\boldsymbol{\Gamma}'\bbQ(s)],
	\end{align*}
	\begin{align*}
		\frac{\partial\bbP_{x(s)}}{\partial \hat{W}_{ji}}=&\frac{\partial \bbX(s)'}{\partial \hat{W}_{ji}}\bbQ(s)+\bbX(s)'\frac{\partial \bbQ(s)}{\partial \hat{W}_{ji}}\\
		=&\sqrt{1-s}1_{ij}\boldsymbol{\Gamma}'\bbQ(s)+\sqrt{1-s}[\bbQ'(s)\boldsymbol{\Gamma}1_{ji}(\bbI-\bbP_{x(s)})-\bbP_{x(s)}1_{ij}\boldsymbol{\Gamma}'\bbQ(s)]\\
		=&\sqrt{1-s}(\bbI-\bbP_{x(s)})1_{ij}\boldsymbol{\Gamma}'\bbQ(s)+\sqrt{1-s}\bbQ'(s)\boldsymbol{\Gamma}1_{ji}(\bbI-\bbP_{x(s)}),\\
		\frac{\partial\bbP_{x(s)}}{\partial {W}_{ji}}=&\sqrt{s}(\bbI-\bbP_{x(s)})1_{ij}\boldsymbol{\Gamma}'\bbQ(s)+\sqrt{s}\bbQ'(s)\boldsymbol{\Gamma}1_{ji}(\bbI-\bbP_{x(s)}).
	\end{align*}
After substituting the above results into $\frac{\partial \Phi_{ji}}{\partial \hat{W}_{ji}}$ and $\frac{\partial \Phi_{ji}}{\partial W_{ji}}$ respectively, the proof is completed.
\end{proof}

	\begin{lemma}\label{lemma2}
	Under the assumptions of  Theorem \ref{th12}, for matrix $\bbQ(s)$, we have the operator norm of $\bbX(s)$ and $(\bbX(s)\bbX(s)')^{-1}$ is order $1$ with high probability, respectively, that is, there exist $b^{+}$ and $b^{-}$ such that
	\begin{align*}
	\mathbb{P}(s_1(\bbX(s))>2b^+)=o(n^{-l}), \  \mathbb{P}\left(\mbox{for all large n, } \|(\bbX(s)\bbX'(s))^{-1}\|\leq \frac{1}{kb^-}\right)=1,
	\end{align*}
	where $k$ is related to the operator norm of $\boldsymbol{\Lambda}$.
	\end{lemma}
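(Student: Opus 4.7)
The key observation is that the alternative-hypothesis construction \eqref{xzassume} lets us rewrite
$$\bbX(s)=[\boldsymbol{\Lambda},\boldsymbol{\Gamma}]\,\bbZ(s),\qquad \bbZ(s):=\begin{pmatrix}\bbY\\ \bbW(s)\end{pmatrix}\in\mathbb{R}^{(p+q)\times n},$$
where the rows of $[\boldsymbol{\Lambda},\boldsymbol{\Gamma}]$ are orthonormal since $[\boldsymbol{\Lambda},\boldsymbol{\Gamma}][\boldsymbol{\Lambda},\boldsymbol{\Gamma}]'=\boldsymbol{\Lambda}\boldsymbol{\Lambda}'+\boldsymbol{\Gamma}\boldsymbol{\Gamma}'=\bbI_p$. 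Because $\bbW(s)=\sqrt{s}\bbW+\sqrt{1-s}\hat{\bbW}$ is an independent linear combination of two matrices whose entries are i.i.d.\ with mean $0$, variance $1/n$ and bounded fourth moment (after the $1/\sqrt n$ rescaling adopted in \eqref{xzassume}), the entries of $\bbW(s)$ remain i.i.d.\ with the same first two moments and finite fourth moment; combined with $\bbY$, the matrix $\bbZ(s)$ is then a genuine $(p+q)\times n$ i.i.d.\ array, and $(p+q)/n\to c_1+c_2<1$ by Assumption~\ref{assum1}.

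For the upper bound, semi-orthogonality gives $\|\bbX(s)\|\le\|[\boldsymbol{\Lambda},\boldsymbol{\Gamma}]\|\cdot\|\bbZ(s)\|=\|\bbZ(s)\|$, so it suffices to bound the top singular value of $\bbZ(s)$. The standard Bai--Yin theorem yields $\|\bbZ(s)\|\to 1+\sqrt{c_1+c_2}$ almost surely, so taking $2b^+$ slightly above this limit handles the almost-sure part; to upgrade the tail to $o(n^{-l})$ I would perform the customary truncation $\tilde W_{ab}=W_{ab}\mathbf{1}_{\{|W_{ab}|\le \delta n^{1/4}\}}$ and similarly for $Y_{ab}$, show that the truncation error is negligible via the fourth-moment assumption, and then invoke the polynomial-rate deviation estimate for the largest singular value of a truncated i.i.d.\ matrix (e.g.\ the moment/cumulant argument of Bai--Silverstein).

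For the lower bound, I would use the variational route. For any unit $u\in\mathbb{R}^p$, the vector $v:=[\boldsymbol{\Lambda},\boldsymbol{\Gamma}]'u\in\mathbb{R}^{p+q}$ is also unit since $\|v\|^2=u'[\boldsymbol{\Lambda},\boldsymbol{\Gamma}][\boldsymbol{\Lambda},\boldsymbol{\Gamma}]'u=\|u\|^2=1$. Hence
$$u'\bbX(s)\bbX(s)'u=v'\bbZ(s)\bbZ(s)'v\ge \lambda_{\min}(\bbZ(s)\bbZ(s)'),$$
so $\lambda_{\min}(\bbX(s)\bbX(s)')\ge\lambda_{\min}(\bbZ(s)\bbZ(s)')$, and Bai--Yin for the smallest singular value of an i.i.d.\ matrix gives $\lambda_{\min}(\bbZ(s)\bbZ(s)')\to(1-\sqrt{c_1+c_2})^2>0$ almost surely. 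An alternative route, which exposes the dependence on $\boldsymbol{\Lambda}$ advertised in the statement, is to use the exact decomposition
$$\bbX(s)\bbX(s)'=\boldsymbol{\Gamma}\bbW(s)(\bbI-\bbP_y)\bbW(s)'\boldsymbol{\Gamma}'+(\boldsymbol{\Lambda}\bbY+\boldsymbol{\Gamma}\bbW(s)\bbP_y)(\boldsymbol{\Lambda}\bbY+\boldsymbol{\Gamma}\bbW(s)\bbP_y)',$$
both summands being PSD. Dropping the second term and using $\sigma_{\min}(\boldsymbol{\Gamma})^2=1-\|\boldsymbol{\Lambda}\|^2$ (which is bounded away from $0$ by Assumption~\ref{assum3}) yields $\lambda_{\min}(\bbX(s)\bbX(s)')\ge (1-\|\boldsymbol{\Lambda}\|^2)\,\lambda_{\min}(\bbW(s)(\bbI-\bbP_y)\bbW(s)')$, where, conditionally on $\bbY$, the second factor is the smallest eigenvalue of a sample covariance matrix of dimension $p$ with effective sample size $n-q$ (and $p/(n-q)\to c_1/(1-c_2)<1$), again controllable by Bai--Yin. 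This identifies $k=1-\|\boldsymbol{\Lambda}\|^2$ and $b^-$ as a constant slightly below the corresponding Mar\v{c}enko--Pastur lower edge.

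The main technical obstacle is the polynomial-rate tail $o(n^{-l})$ in the upper bound: almost-sure convergence from Bai--Yin is not enough, and under only a fourth-moment hypothesis one needs the truncation-plus-moment-bound machinery sketched above to upgrade to the required rate. Everything else is either a direct consequence of the semi-orthogonal representation of $[\boldsymbol{\Lambda},\boldsymbol{\Gamma}]$ or follows from standard spectral bounds on i.i.d.\ sample covariance matrices.
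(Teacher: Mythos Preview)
Your approach is correct in spirit but takes a genuinely different route from the paper. The paper writes $\bbX(s)=\boldsymbol{\Gamma}(\bbC\bbY+\bbW(s))$ with $\bbC=\boldsymbol{\Gamma}^{-1}\boldsymbol{\Lambda}$, handles the upper bound by a triangle inequality $s_1(\bbC\bbY+\bbW(s))\le s_1(\bbC)s_1(\bbY)+s_1(\bbW(s))$, and for the lower bound invokes the \emph{information-plus-noise} machinery: it identifies the LSD of $(\bbC\bbY+\bbW(s))(\bbC\bbY+\bbW(s))'$, shows by an explicit Stieltjes-transform argument that $0$ lies outside its support, and then appeals to exact separation. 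Your semi-orthogonal embedding $\bbX(s)=[\boldsymbol{\Lambda},\boldsymbol{\Gamma}]\bbZ(s)$ with $[\boldsymbol{\Lambda},\boldsymbol{\Gamma}][\boldsymbol{\Lambda},\boldsymbol{\Gamma}]'=\bbI_p$ is more elementary: it reduces both bounds to standard Bai--Yin for a rectangular array with aspect ratio $c_1+c_2<1$, bypassing the information-plus-noise analysis entirely. Your alternative lower-bound decomposition via $\boldsymbol{\Gamma}\bbW(s)(\bbI-\bbP_y)\bbW(s)'\boldsymbol{\Gamma}'$ is also valid and makes the constant $k=1-\|\boldsymbol{\Lambda}\|^2$ explicit, whereas the paper's $k$ comes from $\boldsymbol{\Gamma}^{-1}$ and $b^-$ from the information-plus-noise support edge.

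One point needs correction: $\bbZ(s)$ is \emph{not} a ``genuine i.i.d.\ array''. The entries of the $\bbY$-block and the $\bbW(s)$-block are independent, centred, have common variance $1/n$ and uniformly bounded fourth moments, but their distributions differ in general (Assumption~\ref{assum2} does not force $\bbY$ and $\bbW$ to share a law, and even if it did, $\bbW(s)=\sqrt{s}\bbW+\sqrt{1-s}\hat\bbW$ mixes in Gaussian noise). The Bai--Yin-type bounds you need do extend to independent (not identically distributed) entries with matching first two moments and uniform fourth-moment control, so the argument survives; just state the hypothesis correctly and cite the appropriate version. Your recognition that the $o(n^{-l})$ tail requires truncation plus high-moment bounds is accurate and in fact more careful than the paper's brief appeal to known largest-eigenvalue results.
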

	\begin{proof}
	According to the result of the largest eigenvalues of sample covariance matrix, we have the estimate \begin{align*}
			\mathbb{P}(s_1(\bbW(s))>2)=o(n^{-l})
		\end{align*}
		holds for any $l$. And it holds when replacing $\bbW(s)$ with $\bbY$. Even we do not need to truncate their entries.
		
		Obviously $\|\boldsymbol{\Gamma}\|<1$, define $\bbC=\boldsymbol{\Gamma}^{-1}\boldsymbol{\Lambda}$ and choose constant $b^+$ such that $s_1(\bbC)=b^+-2$. According to Theorem A.8 and A.9 in \cite{BS2010}, we have 
		\begin{align*}
			&s_1(\bbX(s))\leq \|\boldsymbol{\Gamma}\|s_1(\bbC\bbY+\bbW(s)))\leq s_1(\bbC\bbY)+s_1(\bbW(s)).
			\end{align*}
			We can easily get
			\begin{align*}
			&\mathbb{P}(s_1(\bbX(s))>2b^+)\leq	\mathbb{P}(s_1(\bbC)s_1(\bbY)+s_1(\bbW(s))>2b^+)\\
			\leq&\mathbb{P}(\{s_1(\bbY)>2\} \cup \{s_1(\bbW(s))>2\} \cup  \{s_1(\bbC)>b^+-1\})\\
			\leq&\mathbb{P}(s_1(\bbY)>2 )+\mathbb{P}(s_1(\bbW(s))>2 )+\mathbb{P}(s_1(\bbC)>b^+-1)=o(n^{-l}).
		\end{align*}

		Next we consider the operator norm of $(\bbX(s)\bbX(s)')^{-1}$, which equals to that of $[(\bbC\bbY+\bbW(s))(\bbC\bbY+\bbW(s))']^{-1}$. First we know when $c_1<1$, LSD of the later one has no mass at $0$. Next, we need to prove $0$ is outside the support, which is due to the sufficient condition for determining the support. Let $b$ satisfies the following equation set
\begin{equation*}
\left\{
\begin{aligned}
\frac{m}{1+c_1m}&=\int\frac{dH^{y}(t)}{t+(1-c_1)(1+c_1m)}\\
b&=1+c_1m
\end{aligned}
\right.
\end{equation*}
then we have 
\begin{align}\label{formula1}
&\frac{1}{c_1}\left(1-\frac{1}{b}\right)=m_{H^{y}}(-(1-c_1)(1+c_1m))=m_{H^{y}}\left(-(1-c_1)b\right).
\end{align}
When $b\to 0^{+}$, the left hand side $\frac{1}{c_1}\left(1-\frac{1}{b}\right)\to-\infty$ and the right hand side $m_{H^{y}}\left(-(1-c_1)b\right)\to\int_{0}^{+\infty}\frac{1}{t}dH^{y}(t)>0$; when $b\to +\infty$, the left hand side $\frac{1}{c_1}\left(1-\frac{1}{b}\right)\to\frac{1}{c_1}>0$, and the right hand side $m_{H^y}\left(-(1-c_1)b\right)\to0^{+}$. Noting the left hand side is monotone continuous function, according to Existence Theorem of Zero Points, we have the above equation about $b\in \mathbb{R}$ has at least one real solution in positive axis. Then we analysis the positive solution $b$.
By formula \eqref{formula1} and basic calculation
\begin{align*}
&m_{H^y}^{-1}\left(\frac{1}{c_1}\left(1-\frac{1}{b}\right)\right)=-(1-c_1)(1+c_1m)=-(1-c_1)b,
\end{align*}
 we have the derivatives of discriminant function
\begin{align*}
x'(b)=\frac{-2}{b^2}m_{H^y}^{-1}\left(\frac{1}{c_1}\left(1\!-\!\frac{1}{b}\right)\right)\!+\!\frac{1}{b^2}\frac{\partial m_{H^y}^{-1}(\frac{1}{c_!}(1\!-\!\frac{1}{b}))}{\partial \frac{1}{c_1}(1\!-\!\frac{1}{b})}\frac{\partial \frac{1}{c_1}(1\!-\!\frac{1}{b})}{\partial b}\!+\!\frac{-1}{b^2}(1\!-\!c_1),
\end{align*}
where
\begin{align*}
m_{H^y}(z)=\int\frac{dH^y(t)}{t-z},\ \ m'_{H^y}(z)=\int\frac{1}{(t-z)^2}dH^y(t),\ \ (m_{H^y}^{-1})'(z)=\frac{1}{m'_{H^y}(z)}.
\end{align*}
Moreover, we have 
\begin{align*}
x'(b)=\frac{1-c_1}{b^2}+\frac{1}{b^4c_1}\left[\int\frac{dH^y(t)}{[t-\frac{1}{c_1}(1-\frac{1}{b})]^2}\right]^{-1}>0,
\end{align*}
which implies
\begin{align*}
x(b)=\frac{1}{b^2}m_{H^y}^{-1}\left(\frac{1}{c_1}\left(1-\frac{1}{b}\right)\right)+\frac{1}{b}(1-c_1)=0\in S_F^c.
\end{align*}

Considering $S_F^c$ is open set, then $0$ belongs to it, there exists $b^->0$ such that $[0,b^{-})\subset S_F^c$. Combining the spectral property of $\boldsymbol{\Lambda}$ and the exact separation result for the information plus noise matrix, we have for some existing $b^->0$
	\begin{align*}
	&\mathbb{P}(\mbox{for all large n, } \lambda_{min}[(\bbC\bbY+\bbW(s))(\bbC\bbY+\bbW(s))']\geq b^-)\\
	\leq&\mathbb{P}\left(\mbox{for all large n, } \|(\bbX(s)\bbX'(s))^{-1}\|\leq \frac{1}{kb^-}\right)=1.
	\end{align*}
	\end{proof}

	\begin{lemma}\label{lemma3}
	Similar with Lemma \ref{lemma1}, under the assumptions and notations of Theorem \ref{th12}, we have for $l=1,2$,
\begin{align}\label{formula2}
\frac{1}{\sqrt{1-t}}\frac{\partial \Psi_{ji}^l}{\partial \hat{Y}_{ji}}=\frac{1}{\sqrt{t}}\frac{\partial \Psi_{ji}^l}{\partial Y_{ji}}.
\end{align}
	\end{lemma}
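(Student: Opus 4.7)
The plan is to mimic Lemma \ref{lemma1} and exploit the key structural feature of the interpolation $\bbY(t)=\sqrt{t}\bbY+\sqrt{1-t}\hat{\bbY}$. The essential observation is that the entries $Y_{ji}$ and $\hat{Y}_{ji}$ enter every object appearing in $\Psi_{ji}^1=[\boldsymbol{\Lambda}'\bbQ(t)G^2(\bbI-\bbP_{x(t)})]_{ji}$ and $\Psi_{ji}^2=[\bbU(t)G^2(\bbI-\bbP_{y(t)})]_{ji}$ only through the single matrix $\bbY(t)$; indeed, $\bbX(t)=\boldsymbol{\Lambda}\bbY(t)+\boldsymbol{\Gamma}\hat{\bbW}$ is a function of $\bbY(t)$, and therefore so are $\bbQ(t)$, $\bbU(t)$, $\bbP_{x(t)}$, $\bbP_{y(t)}$, and hence $G(t)=(\bbH(t)-z)^{-1}$. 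Thus each $\Psi_{ji}^l$ is a smooth function $F(\bbY(t))$, and the dependence on $Y_{ji}$ versus $\hat{Y}_{ji}$ enters only via the linear scaling factors.

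First I would record the elementary derivatives $\partial \bbY(t)/\partial Y_{ji}=\sqrt{t}\,1_{ji}$ and $\partial \bbY(t)/\partial \hat{Y}_{ji}=\sqrt{1-t}\,1_{ji}$, which differ only by the scalar factor $\sqrt{t}$ versus $\sqrt{1-t}$. Propagating through the chain rule yields, for every constituent object,
\begin{align*}
\frac{\partial \bbX(t)}{\partial Y_{ji}}&=\sqrt{t}\,\boldsymbol{\Lambda}\,1_{ji},&\frac{\partial \bbX(t)}{\partial \hat{Y}_{ji}}&=\sqrt{1-t}\,\boldsymbol{\Lambda}\,1_{ji},\\
\frac{\partial \bbU(t)}{\partial Y_{ji}}&=\sqrt{t}\,\bigl[(\bbY\bbY')^{-1}(t)1_{ji}(\bbI-\bbP_{y(t)})-\bbU(t)1_{ij}\bbU(t)\bigr],&&\text{etc.},
\end{align*}
with the analogous $\hat{Y}_{ji}$-derivatives differing only by replacing $\sqrt{t}$ with $\sqrt{1-t}$. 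The same scaling propagates through $\bbQ(t)$, $\bbP_{x(t)}$, $\bbP_{y(t)}$, and, via the resolvent identity applied to $G$, through $G$ itself, exactly as in the computation for Lemma \ref{lemma1}.

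Next I would expand $\partial\Psi_{ji}^l/\partial Y_{ji}$ and $\partial\Psi_{ji}^l/\partial \hat{Y}_{ji}$ by the product rule as sums of terms, each containing precisely one of the constituent derivatives above. By the preceding step, every matched pair of terms differs only by the overall scalar $\sqrt{t}$ versus $\sqrt{1-t}$. Multiplying the first by $1/\sqrt{t}$ and the second by $1/\sqrt{1-t}$ collapses both into the same expression, giving \eqref{formula2} for $l=1,2$. The main obstacle is purely organizational: one must make sure that $\bbX(t)$ is treated as a function of $\bbY(t)$ alone (so that $\hat{\bbW}$ contributes no extra derivative through the $\bbY$-interpolation), and that the inverse matrices hidden inside $\bbQ(t)$, $\bbU(t)$, and $G$ are differentiated via the identity $\partial M^{-1}=-M^{-1}(\partial M)M^{-1}$ so that the $\sqrt{t}$/$\sqrt{1-t}$ factor cleanly factors out of every resulting term.
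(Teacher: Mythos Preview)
Your proposal is correct and follows essentially the same approach as the paper: both arguments exploit that $Y_{ji}$ and $\hat{Y}_{ji}$ enter $\Psi_{ji}^l$ only through $\bbY(t)$, so that by the chain rule every constituent derivative $\partial\bbQ(t)$, $\partial\bbP_{x(t)}$, $\partial\bbU(t)$, $\partial\bbP_{y(t)}$ differs between the two variables only by the scalar $\sqrt{t}$ versus $\sqrt{1-t}$, which cancels after multiplying by $1/\sqrt{t}$ and $1/\sqrt{1-t}$. The paper simply carries out these intermediate derivatives explicitly, whereas you package the same computation via the single observation $\Psi_{ji}^l=F(\bbY(t))$.
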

\begin{proof}
		For $l=1$, we can easily get
	\begin{align*}
		&\frac{\partial \Psi^1_{ji}}{\partial \hat{Y}_{ji}}=\frac{\partial[\boldsymbol{\Lambda}'\bbQ(t)(\bbH(t)-zI)^{-2}(\bbI-\bbP_{x(t)})]_{ji}}{\partial \hat{Y}_{ji}}\\
		=&\left[\boldsymbol{\Lambda}'\frac{\partial\bbQ(t)}{\partial \hat{Y}_{ji}}G^2(t)(\bbI-\bbP_{x(t)})\right]_{ji}\!-\!\left[\boldsymbol{\Lambda}'\bbQ(t)G(t)\left(\frac{\partial \bbP_{x(t)}}{\partial \hat{Y}_{ji}}\!+\!\frac{\partial \bbP_{y(t)}}{\partial \hat{Y}_{ji}}\right)G^2(t)(\bbI-\bbP_{x(t)})\right]_{ji}\\
		\!-\!&\left[\boldsymbol{\Lambda}'\bbQ(t)G^2(t)\left(\frac{\partial \bbP_{x(t)}}{\partial \hat{Y}_{ji}}\!+\!\frac{\partial \bbP_{y(t)}}{\partial \hat{Y}_{ji}}\right)G(t)(\bbI-\bbP_{x(t)})\right]_{ji}\!-\!\left[\boldsymbol{\Lambda}'\bbQ(t)G^2(t)\frac{\partial\bbP_{x(t)}}{\partial \hat{Y}_{ji}}\right]_{ji}.
	\end{align*}
	We need some matrix derivative results
	\begin{align*}
		\frac{\partial \bbX(t)}{\partial \hat{Y}_{ji}}&=\sqrt{1-t}\boldsymbol{\Lambda}1_{ji},\  \frac{\partial \bbX'(t)}{\partial \hat{Y}_{ji}}=\sqrt{1-t}1_{ij}\boldsymbol{\Lambda}',\  \frac{\partial \bbX(t)}{\partial {Y}_{ji}}=\sqrt{t}\boldsymbol{\Lambda}1_{ji},\ \frac{\partial\bbX'(t)}{\partial {Y}_{ji}}=\sqrt{t}1_{ij}\boldsymbol{\Lambda}',\\
		\frac{\partial \bbQ(t)}{\partial \hat{Y}_{ji}}&=-(\bbX(t)\bbX'(t))^{-1}(\frac{\partial\bbX(t)}{\partial \hat{Y}_{ji}}\bbX'(t)+\bbX(t)\frac{\partial\bbX'(t)}{\partial \hat{Y}_{ji}})\bbQ(t)+(\bbX(t)\bbX'(t))^{-1}\frac{\partial\bbX(t)}{\partial \hat{Y}_{ji}}\\
		&=\!-\!\sqrt{1\!-\!t}(\bbX(t)\bbX'(t))^{-1}(\boldsymbol{\Lambda}1_{ji}\bbX'(t)\!+\!\bbX(t)1_{ij}\boldsymbol{\Lambda}')\bbQ(t)\!+\!\sqrt{1\!-\!t}(\bbX(t)\bbX'(t))^{-1}\boldsymbol{\Lambda}1_{ji}\\
		&=\sqrt{1-t}[(\bbX(t)\bbX'(t))^{-1}\boldsymbol{\Lambda}1_{ji}(\bbI-\bbP_{x(t)})-\bbQ(t)1_{ij}\boldsymbol{\Lambda}'\bbQ(t)],\\
		\frac{\partial \bbQ(t)}{\partial {Y}_{ji}}&=\sqrt{t}[(\bbX(t)\bbX'(t))^{-1}\boldsymbol{\Lambda}1_{ji}(\bbI-\bbP_{x(t)})-\bbQ(t)1_{ij}\boldsymbol{\Lambda}'\bbQ(t)],
	\end{align*}
	and
	\begin{align*}
		\frac{\partial\bbP_{x(t)}}{\partial \hat{Y}_{ji}}=&\frac{\partial \bbX(t)}{\partial \hat{Y}_{ji}}\bbQ(t)+\bbX(t)\frac{\partial \bbQ(t)}{\partial \hat{Y}_{ji}}\\
		=&\sqrt{1-t}1_{ij}\boldsymbol{\Lambda}'\bbQ(t)+\sqrt{1-t}[\bbQ'(t)\boldsymbol{\Lambda}1_{ji}(\bbI-\bbP_{x(t)})-\bbP_{x(t)}1_{ij}\boldsymbol{\Lambda}'\bbQ(t)]\\
		=&\sqrt{1-t}(\bbI-\bbP_{x(t)})1_{ij}\boldsymbol{\Lambda}'\bbQ(t)+\sqrt{1-t}\bbQ'(t)\boldsymbol{\Lambda}1_{ji}(\bbI-\bbP_{x(t)}),\\
		\frac{\partial\bbP_{x(t)}}{\partial {Y}_{ji}}=&\sqrt{t}(\bbI-\bbP_{x(t)})1_{ij}\boldsymbol{\Lambda}'\bbQ(t)+\sqrt{t}\bbQ'(t)\boldsymbol{\Lambda}1_{ji}(\bbI-\bbP_{x(t)}).
	\end{align*}
	Moreover, for $l=2$, we need the following results:
	\begin{align*}
		\frac{\partial \bbU(t)}{\partial {\hat{Y}}_{ji}}&=\sqrt{1-t}[(\bbY(t)\bbY'(t))^{-1}1_{ji}(\bbI-\bbP_{y(t)})-\bbU(t)1_{ij}\bbU(t)],\\
		\frac{\partial \bbU(t)}{\partial {Y}_{ji}}&=\sqrt{t}[(\bbY(t)\bbY'(t))^{-1}1_{ji}(\bbI-\bbP_{y(t)})-\bbU(t)1_{ij}\bbU(t)],\\
		\frac{\partial\bbP_{y(t)}}{\partial {\hat{Y}}_{ji}}&=\sqrt{1-t}(\bbI-\bbP_{y(t)})1_{ij}\bbU(t)+\sqrt{1-t}\bbU'(t)1_{ji}(\bbI-\bbP_{y(t)}),\\		\frac{\partial\bbP_{y(t)}}{\partial {Y}_{ji}}&=\sqrt{t}(\bbI-\bbP_{y(t)})1_{ij}\bbU(t)+\sqrt{t}\bbU'(t)1_{ji}(\bbI-\bbP_{y(t)}).
	\end{align*}
	Observed the difference between the derivative terms $\frac{\partial \bbQ(t)}{\partial {Y}_{ji}}$, $\frac{\partial \bbP_{x(t)}}{\partial {Y}_{ji}}$, $\frac{\partial \bbU}{\partial {Y}_{ji}}$, $\frac{\partial \bbP_{y(t)}}{\partial {Y}_{ji}}$ and  $\frac{\partial \bbQ(t)}{\partial \hat{Y}_{ji}}$, $\frac{\partial \bbP_{x(t)}}{\partial \hat{Y}_{ji}}$, $\frac{\partial \bbU}{\partial \hat{Y}_{ji}}$, $\frac{\partial \bbP_{y(t)}}{\partial \hat{Y}_{ji}}$ lies solely in their coefficients, which can be canceled out by the coefficients in formula \eqref{formula2}.
\end{proof}	
	
	\begin{lemma}\label{lemma4}
	Under the assumptions and notations of Theorem \ref{th12}, we calculate $\bbP_{x}-\bbP_{x}^{(k)}+\bbP_{y}-\bbP_{y}^{(k)}=\boldsymbol{\alpha}_k\boldsymbol{\beta}_k$, and obtain
	\begin{align*}
	\boldsymbol{\alpha}_k=&(\frac{e_k}{1+\gamma_k},\frac{\chi_k}{1+\gamma_k},\frac{e_k}{1+\gamma_k},-\frac{\chi_k}{1+\gamma_k},\frac{e_k}{1+\eta_k},\frac{\omega_k}{1+\eta_k},\frac{e_k}{1+\eta_k},-\frac{\omega_k}{1+\eta_k})\\
		\boldsymbol{\beta}_k=&(\chi_k,e_k,\gamma_ke_k,\chi_k,\omega_k,e_k,\eta_ke_k,\omega_k)'
	\end{align*}
	where 
	\begin{align*}
		\gamma_k=&(\boldsymbol{\Lambda} y_k+\boldsymbol{\Gamma}w_k)'[\bbX^{(k)}(\bbX^{(k)})']^{-1}(\boldsymbol{\Lambda} y_k+\boldsymbol{\Gamma}w_k), \quad
		\eta_k=y_k'[\bbY^{(k)}(\bbY^{(k)})']^{-1}y_k,\\
		\chi_k=&(\bbX^{(k)})'[\bbX^{(k)}(\bbX^{(k)})']^{-1}(\boldsymbol{\Lambda} y_k+\boldsymbol{\Gamma}w_k),\quad\quad \omega_k=(\bbY^{(k)})'[\bbY^{(k)}(\bbY^{(k)})']^{-1}y_k.
	\end{align*}
	\end{lemma}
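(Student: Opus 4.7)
The plan is to handle $\bbP_x - \bbP_x^{(k)}$ and $\bbP_y - \bbP_y^{(k)}$ separately, produce four rank-one summands for each via a Sherman--Morrison identity, and then read off the columns of $\boldsymbol{\alpha}_k$ and rows of $\boldsymbol{\beta}_k$ by matching. The two pieces are structurally identical (with $\boldsymbol{\Lambda} y_k + \boldsymbol{\Gamma} w_k$ replaced by $y_k$ and $\bbX^{(k)}$ by $\bbY^{(k)}$), so it suffices to spell out the $\bbP_x$ calculation in detail and then invoke symmetry.

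First, write $x_k := \boldsymbol{\Lambda} y_k + \boldsymbol{\Gamma} w_k$ for the $k$-th column of $\bbX$. The defining relation $\bbX = \bbX^{(k)} + x_k e_k'$ combined with $\bbX^{(k)} e_k = 0$ gives the rank-one update
\begin{align*}
\bbX \bbX' = \bbX^{(k)} (\bbX^{(k)})' + x_k x_k'.
\end{align*}
Let $A_k := [\bbX^{(k)} (\bbX^{(k)})']^{-1}$, so that $\gamma_k = x_k' A_k x_k$ and $\chi_k = (\bbX^{(k)})' A_k x_k$. By Sherman--Morrison,
\begin{align*}
(\bbX\bbX')^{-1} = A_k - \frac{A_k x_k x_k' A_k}{1+\gamma_k}.
\end{align*}

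Next, expand $\bbP_x = \bbX' (\bbX\bbX')^{-1} \bbX$ using both $\bbX = \bbX^{(k)} + x_k e_k'$ and the displayed Sherman--Morrison formula. A direct computation yields the simplifications
\begin{align*}
(\bbX^{(k)})' (\bbX\bbX')^{-1} \bbX^{(k)} &= \bbP_x^{(k)} - \frac{\chi_k \chi_k'}{1+\gamma_k}, \\
(\bbX^{(k)})' (\bbX\bbX')^{-1} x_k &= \frac{\chi_k}{1+\gamma_k}, \qquad x_k' (\bbX\bbX')^{-1} x_k = \frac{\gamma_k}{1+\gamma_k},
\end{align*}
so that after collecting the four cross terms,
\begin{align*}
\bbP_x - \bbP_x^{(k)} = \frac{e_k}{1+\gamma_k}\chi_k' + \frac{\chi_k}{1+\gamma_k}e_k' + \frac{e_k}{1+\gamma_k}(\gamma_k e_k') + \left(-\frac{\chi_k}{1+\gamma_k}\right)\chi_k'.
\end{align*}
These four rank-one matrices are exactly the first four columns of $\boldsymbol{\alpha}_k$ paired with the first four rows of $\boldsymbol{\beta}_k$.

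The identical argument applied to $\bbP_y - \bbP_y^{(k)}$, using $\bbY\bbY' = \bbY^{(k)}(\bbY^{(k)})' + y_k y_k'$ together with the corresponding quantities $\eta_k$ and $\omega_k$, produces the remaining four rank-one matrices and thus the last four columns of $\boldsymbol{\alpha}_k$ and rows of $\boldsymbol{\beta}_k$. Summing the two contributions gives $\boldsymbol{\alpha}_k \boldsymbol{\beta}_k$ with the stated entries. The only genuine obstacle is bookkeeping: one must verify that every cross term in the two expansions collapses cleanly into the factors $\chi_k, \gamma_k, e_k$ (respectively $\omega_k, \eta_k, e_k$) and that the signs are matched correctly, but no further probabilistic or linear-algebraic input is required.
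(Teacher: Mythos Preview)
Your proposal is correct and follows essentially the same approach as the paper: both arguments use the rank-one update $\bbX\bbX' = \bbX^{(k)}(\bbX^{(k)})' + x_k x_k'$, apply Sherman--Morrison to $(\bbX\bbX')^{-1}$, expand $\bbP_x = \bbX'(\bbX\bbX')^{-1}\bbX$ in the four cross terms, and arrive at the identical decomposition $\bbP_x - \bbP_x^{(k)} = \frac{1}{1+\gamma_k}(e_k\chi_k' + \chi_k e_k') + \frac{\gamma_k}{1+\gamma_k}e_k e_k' - \frac{1}{1+\gamma_k}\chi_k\chi_k'$, then repeat verbatim for $\bbP_y$. Your presentation is somewhat more streamlined (isolating the three scalar/vector quantities $(\bbX^{(k)})'(\bbX\bbX')^{-1}\bbX^{(k)}$, $(\bbX^{(k)})'(\bbX\bbX')^{-1}x_k$, $x_k'(\bbX\bbX')^{-1}x_k$ first), but the substance is identical.
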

	\begin{proof}
	By the formula (0.7.4.1) in \cite{HJ2012M}, we have 
		\begin{align*}
		&\bbP_{x}-\bbP_{x}^{(k)}=\bbX'(\bbX\bbX')^{-1}\bbX-(\bbX^{(k)})'[\bbX^{(k)}(\bbX^{(k)})']^{-1}\bbX^{(k)}\\
		=&(\boldsymbol{\Lambda} y_ke_k'+\boldsymbol{\Gamma}w_ke_k')'(\bbX\bbX')^{-1}\bbX+(\bbX^{(k)})'[(\bbX\bbX')^{-1}-(\bbX^{(k)}(\bbX^{(k)})')^{-1}]\bbX\\
		&+(\bbX^{(k)})'[\bbX^{(k)}(\bbX^{(k)})']^{-1}(\boldsymbol{\Lambda}y_ke_k'+\boldsymbol{\Gamma}w_ke_k')\\
		=&(\boldsymbol{\Lambda} y_ke_k'+\boldsymbol{\Gamma}w_ke_k')'(\bbX\bbX')^{-1}\bbX^{(k)}+(\boldsymbol{\Lambda} y_ke_k'+\boldsymbol{\Gamma}w_ke_k')'(\bbX\bbX')^{-1}(\boldsymbol{\Lambda} y_ke_k'+\boldsymbol{\Gamma}w_ke_k')\\
		&+(\bbX^{(k)})'[	(\bbX\bbX')^{-1}\!-\!(\bbX^{(k)}(\bbX^{(k)})')^{-1}]\bbX^{(k)}+(\bbX^{(k)})'[\bbX^{(k)}(\bbX^{(k)})']^{-1}(\boldsymbol{\Lambda} y_ke_k'+\boldsymbol{\Gamma}w_ke_k')\\
		&+(\bbX^{(k)})'[(\bbX\bbX')^{-1}\!-\!(\bbX^{(k)}(\bbX^{(k)})')^{-1}](\boldsymbol{\Lambda} y_ke_k'\!+\!\boldsymbol{\Gamma}w_ke_k')\\
		=&(\boldsymbol{\Lambda} y_ke_k'\!+\!\boldsymbol{\Gamma}w_ke_k')'[\bbX^{(k)}(\bbX^{(k)})']^{-1}\bbX^{(k)}\!+\!(\boldsymbol{\Lambda} y_ke_k'\!+\!\boldsymbol{\Gamma}w_ke_k')'\left((\bbX\bbX')^{-1}\!-\![\bbX^{(k)}(\bbX^{(k)})']^{-1}\right)\bbX^{(k)}\\
		&+(\boldsymbol{\Lambda} y_ke_k'+\boldsymbol{\Gamma}w_ke_k')'[\bbX^{(k)}(\bbX^{(k)})']^{-1}(\boldsymbol{\Lambda} y_ke_k'+\boldsymbol{\Gamma}w_ke_k')\\
		&+(\boldsymbol{\Lambda} y_ke_k'+\boldsymbol{\Gamma}w_ke_k')'\left([\bbX\bbX']^{-1}-[\bbX^{(k)}(\bbX^{(k)})']^{-1}\right)(\boldsymbol{\Lambda} y_ke_k'+\boldsymbol{\Gamma}w_ke_k')\\
		&+\!(\bbX^{(k)})'\left((\bbX\bbX')^{-1}\!-\![\bbX^{(k)}(\bbX^{(k)})']^{-1}\right)\bbX^{(k)}\!+\!(\bbX^{(k)})'[\bbX^{(k)}(\bbX^{(k)})']^{-1}(\boldsymbol{\Lambda} y_ke_k'\!+\!\boldsymbol{\Gamma}w_ke_k')\\
		&+(\bbX^{(k)})'\left((\bbX\bbX')^{-1}-[\bbX^{(k)}(\bbX^{(k)})']^{-1}\right)(\boldsymbol{\Lambda} y_ke_k'+\boldsymbol{\Gamma}w_ke_k').
	\end{align*}
		Here we need the following results:
	\begin{align*}
		\bbX\bbX'-\bbX^{(k)}(\bbX^{(k)})'=&x_kx_k'=(\boldsymbol{\Lambda} y_k+\boldsymbol{\Gamma}w_k)(\boldsymbol{\Lambda}y_k+\boldsymbol{\Gamma}w_k)',\\
		(\bbX\bbX')^{-1}\!-\![\bbX^{(k)}(\bbX^{(k)})']^{-1}=&\!-\!\frac{[\bbX^{(k)}(\bbX^{(k)})']^{-1}(\boldsymbol{\Lambda} y_k\!+\!\boldsymbol{\Gamma}w_k)(\boldsymbol{\Lambda}y_k\!+\!\boldsymbol{\Gamma}w_k)'[\bbX^{(k)}(\bbX^{(k)})']^{-1}}{1\!+\!\gamma_k}.
	\end{align*}
	The we have 
	\begin{align*}
		\bbP_{x}-\bbP_{x}^{(k)}=&(\boldsymbol{\Lambda} y_ke_k'+\boldsymbol{\Gamma}w_ke_k')'[\bbX^{(k)}(\bbX^{(k)})']^{-1}\bbX^{(k)}+\gamma_k e_ke_k'\\
		&-\frac{\gamma_k}{1+\gamma_k}(\boldsymbol{\Lambda} y_ke_k'+\boldsymbol{\Gamma}w_ke_k')'[\bbX^{(k)}(\bbX^{(k)})']^{-1}\bbX^{(k)}\\
		&-\frac{\gamma_k^2}{1+\gamma_k}e_ke_k'+(\bbX^{(k)})'[\bbX^{(k)}(\bbX^{(k)})']^{-1}(\boldsymbol{\Lambda} y_ke_k'+\boldsymbol{\Gamma}w_ke_k')\\
		&-\!\frac{1}{1\!+\!\gamma_k}(\bbX^{(k)})'(\bbX^{(k)}(\bbX^{(k)})')^{-1}(\boldsymbol{\Lambda} y_k\!+\!\boldsymbol{\Gamma}w_k)(\boldsymbol{\Lambda}y_k\!+\!\boldsymbol{\Gamma}w_k)'(\bbX^{(k)}(\bbX^{(k)})')^{-1}\bbX^{(k)}\\
		&-\frac{\gamma_k}{1+\gamma_k}(\bbX^{(k)})'(\bbX^{(k)}(\bbX^{(k)})')^{-1}(\boldsymbol{\Lambda} y_k+\boldsymbol{\Gamma}w_k)e_k'\\
	=&\frac{1}{1+\gamma_k}(e_k\chi_k'+\chi_ke_k')+\frac{\gamma_k}{1+\gamma_k}e_ke_k'-\frac{1}{1+\gamma_k}\chi_k\chi_k',
	\end{align*}
	abbreviated $(\bbX^{(k)})'[\bbX^{(k)}(\bbX^{(k)})']^{-1}(\boldsymbol{\Lambda} y_k+\boldsymbol{\Gamma}w_k)$ as $\chi_k$.
	Similarly,
	\begin{align*}
		\bbP_{y}-\bbP_{y}^{(k)}=& e_ky'_k[\bbY^{(k)}(\bbY^{(k)})']^{-1}\bbY^{(k)}\\
		&-e_ky'_k\frac{(\bbY^{(k)}(\bbY^{(k)})')^{-1}y_ky'_k(\bbY^{(k)}(\bbY^{(k)})')^{-1}}{1+y'_k(\bbY^{(k)}(\bbY^{(k)})')^{-1}y_k}\bbY^{(k)}\\
		&+e_ky'_k[\bbY^{(k)}(\bbY^{(k)})']^{-1}y_ke_k'\\
		&-e_ky'_k\frac{(\bbY^{(k)}(\bbY^{(k)})')^{-1}y_ky'_k(\bbY^{(k)}(\bbY^{(k)})')^{-1}}{1+y'_k(\bbY^{(k)}(\bbY^{(k)})')^{-1}y_k}y_ke_k'\\
		&-(\bbY^{(k)})'\frac{(\bbY^{(k)}(\bbY^{(k)})')^{-1}y_ky'_k(\bbY^{(k)}(\bbY^{(k)})')^{-1}}{1+y'_k(\bbY^{(k)}(\bbY^{(k)})')^{-1}y_k}\bbY^{(k)}\\
		&-(\bbY^{(k)})'\frac{(\bbY^{(k)}(\bbY^{(k)})')^{-1}y_ky'_k(\bbY^{(k)}(\bbY^{(k)})')^{-1}}{1+y'_k(\bbY^{(k)}(\bbY^{(k)})')^{-1}y_k}y_ke_k'\\		
		&+(\bbY^{(k)})'[\bbY^{(k)}(\bbY^{(k)})']^{-1}y_ke_k'\\
		=&\frac{1}{1+\eta_k}(e_k\omega_k'+\omega_ke_k')+\frac{\eta_k}{1+\eta_k}e_ke_k'-\frac{1}{1+\eta_k}\omega_k\omega_k',
	\end{align*}
	abbreviated $(\bbY^{(k)})'[\bbY^{(k)}(\bbY^{(k)})']^{-1}y_k$ as $\omega_k$.
	\end{proof}

\begin{lemma}\label{lemma5}
Under the assumptions and notations of Theorem \ref{th12}, for the matrix $\bbM$ with fixed dimension, we have
	\begin{align*}
		&\bbI_{8}+\boldsymbol{\beta}_k(\bbH^{(k)}-z)^{-1}\boldsymbol{\alpha}_k=\bbM+\tilde{\bbM}\\
		&(\bbI_{8}+\boldsymbol{\beta}_k(\bbH^{(k)}-z)^{-1}\boldsymbol{\alpha}_k)^{-1}=\bbM^{-1}+\widetilde{\bbM^{-1}},
	\end{align*}
	where for $l=1,2,4,$ $\mathbb{E}|\tilde{\bbM}_{ij}|^{2l}=O(\frac{1}{n})$ and for $r=1,2,$ $\mathbb{E}|(\widetilde{\bbM^{-1}})_{ij}|^{2r}=O(\frac{1}{n})$.
\end{lemma}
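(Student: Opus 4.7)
The plan is to take $\bbM$ to be the $8\times 8$ deterministic matrix (measurable with respect to $\bbH^{(k)}$) obtained from $\bbI_8+\boldsymbol{\beta}_k(\bbH^{(k)}-z)^{-1}\boldsymbol{\alpha}_k$ by replacing every bilinear or quadratic form in the random pair $(y_k,w_k)$ by its conditional expectation given $\bbH^{(k)}$. Writing $G=(\bbH^{(k)}-z)^{-1}$ and noting that $\bbH^{(k)},\bbX^{(k)},\bbY^{(k)}$ are independent of $(y_k,w_k)$, each entry of $\boldsymbol{\beta}_kG\boldsymbol{\alpha}_k$ is, by Lemma \ref{lemma4}, a ratio whose numerator is a bilinear/quadratic form of the i.i.d.\ vector $(y_k',w_k')'$ with coefficient matrix depending only on $\bbH^{(k)}$, and whose denominator is either $1+\gamma_k$ or $1+\eta_k$, both nonnegative. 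For instance, $\gamma_k$ is replaced by $\operatorname{tr}[\bbX^{(k)}(\bbX^{(k)})']^{-1}$, the bilinear form $\chi_k'Ge_k$ is replaced by $0$, and the quadratic form $\chi_k'G\chi_k$ is replaced by $\operatorname{tr}(\bbA_k'G\bbA_k)$ with $\bbA_k=(\bbX^{(k)})'[\bbX^{(k)}(\bbX^{(k)})']^{-1}\boldsymbol{\Lambda}$ stacked with the analogous block from $\boldsymbol{\Gamma}$. This makes the entries of $\tilde{\bbM}$ centered quadratic/bilinear forms in $(y_k,w_k)$.

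For the moment bound on $\tilde{\bbM}_{ij}$, I would first truncate the entries of $\bbW$ and $\bbY$ at level $n^{1/4}$ and re-center; Assumption \ref{assum2} (finite fourth moment) guarantees that the truncation error is negligible in $L^2$ and does not affect the order of the target bound. After truncation, Lemma B.26 of \cite{BS2010} applied conditionally on $\bbH^{(k)}$ gives, for any $l\geq 1$,
\begin{equation*}
\mathbb{E}\bigl[\,\bigl|u'Bu-\operatorname{tr}B\bigr|^{2l}\,\bigm|\,B\,\bigr]\leq C_l\bigl(\operatorname{tr}BB^{*}\bigr)^{l}+C_l\eta_n^{4l-4}\operatorname{tr}(BB^{*})^{l},
\end{equation*}
where $\eta_n=n^{1/4}$ is the truncation level. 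Combined with the operator norm estimates $\|G\|\leq 1/v$, $\|[\bbX^{(k)}(\bbX^{(k)})']^{-1}\|=O(1)$, $\|[\bbY^{(k)}(\bbY^{(k)})']^{-1}\|=O(1)$ supplied by Lemma \ref{lemma2}, one gets $\operatorname{tr}BB^{*}=O(n)$ and $\operatorname{tr}(BB^{*})^{l}=O(n)$, so that $\mathbb{E}|\tilde{\bbM}_{ij}|^{2l}=O(n^{-l})+O(\eta_n^{4l-4}n^{1-2l})$, which is $O(1/n)$ for $l=1,2,4$. The nonlinear dependence on $(1+\gamma_k)^{-1}$ is handled by expanding $(1+\gamma_k)^{-1}=(1+\mathbb{E}\gamma_k)^{-1}-(1+\mathbb{E}\gamma_k)^{-1}(\gamma_k-\mathbb{E}\gamma_k)(1+\gamma_k)^{-1}$ and using the deterministic lower bound $1+\gamma_k\geq 1$.

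For the second conclusion, I would start from the algebraic identity
\begin{equation*}
\widetilde{\bbM^{-1}}=(\bbM+\tilde{\bbM})^{-1}-\bbM^{-1}=-\bbM^{-1}\tilde{\bbM}(\bbM+\tilde{\bbM})^{-1}.
\end{equation*}
The matrix $\bbI_8+\boldsymbol{\beta}_k G\boldsymbol{\alpha}_k$ is exactly the Schur complement encountered when expressing $(\bbH-z)^{-1}$ as a rank-$8$ perturbation of $(\bbH^{(k)}-z)^{-1}$, and since $\operatorname{Im}z>0$ a standard imaginary-part argument (writing $G=G_R+\mathrm{i}\,v\,G^{*}G$ with $G_R$ Hermitian) shows that the imaginary part of $\boldsymbol{\alpha}_k^{*}(\boldsymbol{\beta}_kG\boldsymbol{\alpha}_k)\boldsymbol{\alpha}_k$ is positive definite, yielding $\|(\bbI_8+\boldsymbol{\beta}_k G\boldsymbol{\alpha}_k)^{-1}\|\leq C/v$ almost surely. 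Taking conditional expectation preserves this bound, so $\|\bbM^{-1}\|=O(1)$ as well. The Cauchy--Schwarz inequality applied to the identity for $\widetilde{\bbM^{-1}}$ then gives $\mathbb{E}|(\widetilde{\bbM^{-1}})_{ij}|^{2r}=O(1/n)$ for $r=1,2$ from the corresponding bound on $\tilde{\bbM}$.

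The main obstacle I expect is ensuring the deterministic upper bound on $\|(\bbI_8+\boldsymbol{\beta}_k G\boldsymbol{\alpha}_k)^{-1}\|$ transfers cleanly to $\bbM$, because $\bbM$ is not literally the conditional expectation of the inverse but the inverse of the conditional expectation of $\bbI_8+\boldsymbol{\beta}_kG\boldsymbol{\alpha}_k$. To bridge this gap I would use the truncation argument above to restrict to an event on which $\|\tilde{\bbM}\|$ is of order $n^{-1/4}$ (with complement of probability $O(n^{-l})$ for any fixed $l$); on this event $\bbM^{-1}$ inherits the operator-norm bound from $(\bbM+\tilde{\bbM})^{-1}$ via the Neumann series, while the complement contributes negligibly to the moments by Cauchy--Schwarz together with the deterministic estimate $\|\bbM^{-1}\|\leq \|\bbM\|^{-1}\cdot\kappa(\bbM)\leq C(v)$ that follows once invertibility is established.
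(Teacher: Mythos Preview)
Your construction of $\bbM$, the truncation-plus-quadratic-form moment estimates for $\tilde{\bbM}_{ij}$, and the resolvent identity you use to pass to $\widetilde{\bbM^{-1}}$ all match the paper's argument. The genuine gap is the bound on $\|(\bbI_8+\boldsymbol{\beta}_kG\boldsymbol{\alpha}_k)^{-1}\|$ and $\|\bbM^{-1}\|$. Your imaginary-part argument does not apply here: by Lemma~\ref{lemma4} one has $\boldsymbol{\alpha}_k\neq\boldsymbol{\beta}_k'$ (the columns of $\boldsymbol{\alpha}_k$ carry factors $(1+\gamma_k)^{-1}$, $(1+\eta_k)^{-1}$ and sign flips that the rows of $\boldsymbol{\beta}_k$ do not), so although $\Im G=vG^{*}G$, the $8\times8$ matrix $v\,\boldsymbol{\beta}_kG^{*}G\boldsymbol{\alpha}_k$ is not even Hermitian, let alone positive definite. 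The expression $\boldsymbol{\alpha}_k^{*}(\boldsymbol{\beta}_kG\boldsymbol{\alpha}_k)\boldsymbol{\alpha}_k$ does not multiply dimensionally, and the fallback $\|\bbM^{-1}\|\leq\|\bbM\|^{-1}\kappa(\bbM)$ is a tautology. Since your Neumann-series transfer is premised on the bound for $(\bbI_8+\boldsymbol{\beta}_kG\boldsymbol{\alpha}_k)^{-1}$, the step is unfounded as written.

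The paper handles this point by an explicit spectral computation rather than an abstract argument. It writes $\bbM$ out entrywise, observes that $\bbM-\bbI_8$ has rank $3$ (all eight rows of $\boldsymbol{\beta}_k$ and columns of $\boldsymbol{\alpha}_k$ lie in $\mathrm{span}\{e_k,\chi_k,\omega_k\}$), and solves for the imaginary parts of the three nonzero eigenvalues by setting two $4\times4$ subdeterminants of $\Im\bbM-(\Im\lambda)\bbI$ to zero. One eigenvalue has $\Im\lambda$ proportional to $\Im\theta_k>0$ with $\theta_k=G_{kk}$; the other two satisfy a quadratic whose constant term $(\Im\theta_k^{\Lambda})^2-\Im\gamma_k^{\Lambda,H}\cdot\Im\eta_k^{H}$ is strictly negative by the trace Cauchy--Schwarz inequality $|\tr AB^{*}|^{2}<\tr AA^{*}\,\tr BB^{*}$ (equality only for proportional $A,B$). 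This keeps all three nonzero eigenvalues of $\bbM-\bbI$ away from $-1$ and yields the required uniform bound on $\|\bbM^{-1}\|$; the same computation with the random $\gamma_k,\eta_k,\ldots$ in place of $\gamma_k^0,\eta_k^0,\ldots$ handles $(\bbI_8+\boldsymbol{\beta}_kG\boldsymbol{\alpha}_k)^{-1}$.
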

\begin{proof}
In order to calculate higher order moments, we truncate the entries of $\mathbf{Y}$ and $\mathbf{W}$ following the approach described in \cite{BaiS04C}, that is, $y_{ij}^{t}=y_{ij}I_{\{|y_{ij}|<\varepsilon_n\sqrt{n}\}}$. And 
		\begin{align*}
\|F^{\bbH}-F^{\bbH^{t}}\|\leq&\frac{1}{n}rank(\bbP_x-\bbP_{x^{t}}+\bbP_y-\bbP_{y^{t}})\\
\leq&\frac{c}{n}rank(\bbY-\bbY^{t})+\frac{c}{n}rank(\bbW-\bbW^{t})\\
\leq&\frac{c}{n}\sum_{i=1}^q\sum_{j=1}^nI_{\{|y_{ij}|\geq\varepsilon_n\sqrt{n}\}}+\frac{c}{n}\sum_{i=1}^p\sum_{j=1}^nI_{\{|w_{ij}|\geq\varepsilon_n\sqrt{n}\}},
		\end{align*}
		then we have 
		\begin{align*}
		\mathbb{P}(\|F^{\bbH}-F^{\bbH^{t}}\|\!\geq\!\delta)\!\leq\!\mathbb{P}(\frac{c}{n}\sum_{i=1}^q\sum_{j=1}^nI_{\{|y_{ij}|\geq\varepsilon_n\sqrt{n}\}}\!\geq\!\frac{\delta}{2})\!+\!\mathbb{P}(\frac{c}{n}\sum_{i=1}^p\sum_{j=1}^nI_{\{|w_{ij}|\geq\varepsilon_n\sqrt{n}\}}\!\geq\!\frac{\delta}{2}).
		\end{align*}
		According to concentration inequality and step 2 in \cite{2012The}, we have $\|F^{\bbH}-F^{\bbH^{t}}\|\overset{a.s.}{\to} 0$.
		Define $y_{ij}^{c}=y_{ij}^{t}-\mathbb{E}y_{ij}^{t}$, we consider
		\begin{align*}
		\|F^{\bbH^{t}}-F^{\bbH^{c}}\|\leq&\frac{1}{n}rank(\bbP_{x^{t}}-\bbP_{x^{c}}+\bbP_{y^{t}}-\bbP_{y^{c}})\\
\leq&\frac{c}{n}rank(\bbY^{t}-\bbY^{c})+\frac{c}{n}rank(\bbW^{t}-\bbW^{c})\leq\frac{c}{n}
		\end{align*}
		where for any $i,j$, $\{\mathbb{E}y_{ij}^{t}\}$ and $\{\mathbb{E}w_{ij}^{t}\}$ are the same, respectively. That is $\mathbb{E}\bbY^t$ and $\mathbb{E}\bbW^t$ are both rank one.
		Next write $\sigma_y=\mathbb{E}|y_{ij}^{c}|^2$, $y_{ij}^{s}=y_{ij}^{c}/\sigma_y$, we consider
		\begin{align*}
		L^3(F^{\bbH^{s}},F^{\bbH^{c}})\leq\tr(\bbP_{x^{s}}-\bbP_{x^{c}}+\bbP_{y^{s}}-\bbP_{y^{c}})(\bbP_{x^{s}}-\bbP_{x^{c}}+\bbP_{y^{s}}-\bbP_{y^{c}})
		\end{align*}
	noticing $\bbP_{y^{c}}$ is scale invariant, that is $\bbP_{y^{s}}=\bbP_{y^{c}}$. Then we have 
	\begin{align*}	
	L^3(F^{\bbH^{s}},F^{\bbH^{c}})\leq\tr(\bbP_{x^{s}}-\bbP_{x^{c}})(\bbP_{x^{s}}-\bbP_{x^{c}})\leq c\left(\frac{\sigma_w}{\sigma_y}-1\right)
		\end{align*}
		where 
		\begin{align*}
		&\bbP_{x^{s}}\!=\!\left(\boldsymbol{\Lambda}\frac{\bbY^{c}}{\sigma_y}\!+\!\boldsymbol{\Gamma}\frac{\bbW^{c}}{\sigma_w}\right)'\left[\left(\boldsymbol{\Lambda}\frac{\bbY^{c}}{\sigma_y}\!+\!\boldsymbol{\Gamma}\frac{\bbW^{c}}{\sigma_w}\right)\left(\boldsymbol{\Lambda}\frac{\bbY^{c}}{\sigma_y}\!+\!\boldsymbol{\Gamma}\frac{\bbW^{c}}{\sigma_w}\right)'\right]^{-1}\left(\boldsymbol{\Lambda}\frac{\bbY^{c}}{\sigma_y}\!+\!\boldsymbol{\Gamma}\frac{\bbW^{c}}{\sigma_w}\right)\\
		\!=\!&\left(\frac{\sigma_w}{\sigma_y}\boldsymbol{\Lambda}\bbY^{c}\!+\!\boldsymbol{\Gamma}\bbW^{c}\right)'\left[\left(\frac{\sigma_w}{\sigma_y}\boldsymbol{\Lambda}\bbY^{c}\!+\!\boldsymbol{\Gamma}\bbW^{c}\right)\left(\frac{\sigma_w}{\sigma_y}\boldsymbol{\Lambda}\bbY^{c}\!+\!\boldsymbol{\Gamma}\bbW^{c}\right)'\right]^{-1}\left(\frac{\sigma_w}{\sigma_y}\boldsymbol{\Lambda}\bbY^{c}\!+\!\boldsymbol{\Gamma}\bbW^{c}\right)
		\end{align*}
	which equals $\bbX^{s}-\bbX^{c}=(\frac{\sigma_w}{\sigma_y}-1)\boldsymbol{\Lambda}\bbY^{c}$.	
		Because the truncation is the same with that of \cite{BaiS04C}, obviously, we have $\sigma_y\to 1$ and $\sigma_w\to 1$, which implies $\sigma_w/\sigma_y-1\to 0$. In summary, truncation, centering and normalization do not change the limit of target LSD, so we assume the entries of $\bbW$ and $\bbY$ are bounded by $\varepsilon_n\sqrt{n}$ from now on.

	We first calculate the expectation for term $\boldsymbol{\beta}_k(\bbH^{(k)}-z)^{-1}\boldsymbol{\alpha}_k$ to approximate it. Here we mainly apply the following equalities:
	\begin{align*}
	&\mathbb{E}\chi_k'(\bbH^{(k)}-z)^{-1}\chi_k=\tr(\bbH^{(k)}-z)^{-1}(\bbQ^{(k)})'\bbQ^{(k)}:=\gamma_k^{\Lambda,H},\\
	&\mathbb{E}\chi_k'(\bbH^{(k)}-z)^{-1}\omega_k=\tr\boldsymbol{\Lambda}'\bbQ^{(k)}(\bbH^{(k)}-z)^{-1}(\bbU^{(k)})':=\theta_k^{\Lambda},\\
		&\mathbb{E}\omega_k'(\bbH^{(k)}-z)^{-1}\omega_k=\tr\bbU^{(k)}(\bbH^{(k)}-z)^{-1}(\bbU^{(k)})':=\eta_k^{H},\\
		&\mathbb{E}e_k'(\bbH^{(k)}-z)^{-1}e_k=(\bbH^{(k)}-z)^{-1}_{kk}:=\theta_k,\\
		&\mathbb{E}\gamma_k=\tr[\bbX^{(k)}(\bbX^{(k)})']^{-1}:=\gamma_k^0,\quad \mathbb{E}\eta_k=\tr[\bbY^{(k)}(\bbY^{(k)})']^{-1}:=\eta_k^0,
	\end{align*}
	then we have 
	\begin{align*}
	\bbM\!=\!\begin{pmatrix}
	1,&\frac{\gamma_k^{\Lambda,H}}{1+\gamma_k^0},&0,&-\frac{\gamma_k^{\Lambda,H}}{1+\gamma_k^0},&0,&\frac{\theta_k^{\Lambda}}{1+\eta_k^0},&0,&-\frac{\theta_k^{\Lambda}}{1+\eta_k^0}\\
	\frac{\theta_k}{1+\gamma_k^0},&1,&\frac{\theta_k}{1+\gamma_k^0},&0,&\frac{\theta_k}{1+\eta_k^0}, &0,&\frac{\theta_k}{1+\eta_k^0}, &0\\
		\frac{\gamma_k^0\theta_k}{1+\gamma_k^0},&0,&1+\frac{\gamma_k^0\theta_k}{1+\gamma_k^0},&0,&\frac{\gamma_k^0\theta_k}{1+\eta_k^0}, &0,&\frac{\gamma_k^0\theta_k}{1+\eta_k^0}, &0\\
		0,&\frac{\gamma_k^{\Lambda,H}}{1+\gamma_k^0},&0,&1-\frac{\gamma_k^{\Lambda,H}}{1+\gamma_k^0},&0,&\frac{\theta_k^{\Lambda}}{1+\eta_k^0},&0,&-\frac{\theta_k^{\Lambda}}{1+\eta_k^0}\\
	0,&\frac{\theta_k^{\Lambda}}{1+\gamma_k^0}, &0, &-\frac{\theta_k^{\Lambda}}{1+\gamma_k^0}, &1, &\frac{\eta_k^H}{1+\eta_k^0}, &0, &-\frac{\eta_k^H}{1+\eta_k^0}\\
	\frac{\theta_k}{1+\gamma_k^0},&0,&\frac{\theta_k}{1+\gamma_k^0},&0,&\frac{\theta_k}{1+\eta_k^0}, &1,&\frac{\theta_k}{1+\eta_k^0}, &0\\
		\frac{\eta_k^0\theta_k}{1+\gamma_k^0},&0,&\frac{\eta_k^0\theta_k}{1+\gamma_k^0},&0,&\frac{\eta_k^0\theta_k}{1+\eta_k^0}, &0,&1+\frac{\eta_k^0\theta_k}{1+\eta_k^0}, &0\\
	0,&\frac{\theta_k^{\Lambda}}{1+\gamma_k^0}, &0, &-\frac{\theta_k^{\Lambda}}{1+\gamma_k^0}, &0, &\frac{\eta_k^H}{1+\eta_k^0}, &0, &1-\frac{\eta_k^H}{1+\eta_k^0}\\
	\end{pmatrix}.
	\end{align*}
	Obviously, we have for $l=1,2,4,$
	\begin{align*}
	\mathbb{E}|1\!+\![\boldsymbol{\beta}_k(\bbH^{(k)}\!-\!z)^{-1}\boldsymbol{\alpha}_k]_{ij}\!-\!\bbM_{ij}|^{2l}=O\left(\frac{1}{n}\right).
	\end{align*}
	Combining the formula $A^{-1}\!-\!B^{-1}=\!-\!B^{-1}(A\!-\!B)A^{-1}=\!-\!B^{-1}(A\!-\!B)B^{-1}\!+\!B^{-1}(A\!-\!B)B^{-1}(A\!-\!B)A^{-1}$, we have
	\begin{align*}
	\mathbb{E}|\widetilde{\bbM^{-1}}_{ij}|^2=C\mathbb{E}|\tilde{\bbM}_{ij}|^2+C\sqrt{\mathbb{E}|\tilde{\bbM}_{ij}|^4}\sqrt{\mathbb{E}|\tilde{\bbM}_{i'j'}|^4}=O\left(\frac{1}{n}\right),\\
	\mathbb{E}|\widetilde{\bbM^{-1}}_{ij}|^4=C\mathbb{E}|\tilde{\bbM}_{ij}|^4+C\sqrt{\mathbb{E}|\tilde{\bbM}_{ij}|^8}\sqrt{\mathbb{E}|\tilde{\bbM}_{i'j'}|^8}=O\left(\frac{1}{n}\right),
	\end{align*}
	where we need the entrices in $[\bbI_8+\boldsymbol{\beta}_k(\bbH-z)^{-1}\boldsymbol{\alpha}_k]^{-1}$ and $\bbM^{-1}$ are bounded with probability 1.
	
	Consider the matrix $\bbM-\bbI$ has three linearly independent rows, its rank is $3$. We calculate the imaginary part of non-zero eigenvalues $\lambda$ via the fact that two  determinants of $4\times 4$ minor of $\Im \bbM-\Im \lambda\bbI$ are zero, which are
	\begin{align*}
	\det \begin{pmatrix}
	-\Im\lambda, &\frac{\Im\theta_k}{1+\gamma_k^0}, &\frac{\Im\theta_k}{1+\eta_k^0},&0\\
	\gamma_k^0\Im\lambda, &-\Im\lambda, &0, &0\\
	\eta_k^0\Im\lambda, &0, &-\Im\lambda,  &0\\
	\Im\lambda, &0, &0, &-\Im\lambda
	\end{pmatrix}=0=\det \begin{pmatrix}
	0, &0, &-\frac{\Im\theta_k^{\Lambda}}{1+\gamma_k^0}, &-\Im\lambda-\frac{\Im\eta_k^{H}}{1+\eta_k^0}\\
		0, &-\Im\lambda, &-\frac{\Im\gamma_k^{\Lambda,H}}{1+\gamma_k^0}, &-\frac{\Im\theta_k^{\Lambda}}{1+\eta_k^0}\\
		0, &\Im\lambda, &-\Im\lambda, &0\\
		-\Im\lambda, &0, &0, &\Im\lambda
	\end{pmatrix}.
	\end{align*}
	The we have $\Im\lambda$ satisfies the following equations
	\begin{align*}
	&(\Im\lambda)^3\left[\Im\lambda-\left(\frac{\gamma_k^0}{1+\gamma_k^0}-\frac{\eta_k^0}{1+\eta_k^0}\right)\Im\theta_k\right]=0\\
		&(\Im\lambda)^2\left[(\Im\lambda)^2+\left(\frac{\Im\gamma_k^{\Lambda,H}}{1+\gamma_k^0}+\frac{\Im\eta_k^{H}}{1+\eta_k^0}\right)\Im\lambda-\frac{(\Im\theta_k^{\Lambda})^2-\Im\gamma_k^{\Lambda,H}\Im\eta_k^{H}}{(1+\gamma_k^0)(1+\eta_k^0)}\right]=0,
	\end{align*}
	where $\gamma_k^0$ and $\eta_k^0$ are positive.
	Because when $\Im z>0$, $\Im\theta_k>0$, we have $\Im\lambda_1\neq 0$. Notice that the formula $|\tr(AB')|^2\leq \tr(AA')\tr(BB')$, and  when $A=cB$, the equality holds.
	Combining the following facts
	\begin{align*}
	\Im\theta_k^{\Lambda}=&(\Im z)\tr\boldsymbol{\Lambda}'\bbQ^{(k)}(\bbH^{(k)}-z)^{-1}(\bbH^{(k)}-\bar{z})^{-1}(\bbU^{(k)})'\\
		\Im\gamma_k^{\Lambda,H}=&(\Im z)\tr\bbQ^{(k)}(\bbH^{(k)}-z)^{-1}(\bbH^{(k)}-\bar{z})^{-1}(\bbQ^{(k)})'\\
		\Im\eta_k^{H}=&(\Im z)\tr\bbU^{(k)}(\bbH^{(k)}-z)^{-1}(\bbH^{(k)}-\bar{z})^{-1}(\bbU^{(k)})'
	\end{align*}
	we have $(\Im\theta_k^{\Lambda})^2-\Im\gamma_k^{\Lambda,H}\Im\eta_k^{H}<0$ strictly holds, then $\Im\lambda_2\neq 0$ and $\Im\lambda_3\neq 0$. This implies the non-zero eigenvalues of $\bbM-\bbI$ are complex, away from $-1$. 
	\end{proof}

\section*{Acknowledgments}

The author is very grateful to Zhidong Bai for many stimulate conversations concerning the proof of Lemmas. 
The author also thanks Jiang Hu for inspiring to explore the relationship (Theorem \ref{th11}).

\bibliographystyle{imsart-number} 
\bibliography{paper}       

\begin{thebibliography}{18}

\bibitem{Anderson2003}
\begin{bbook}[author]
\bauthor{\bsnm{Anderson},~\bfnm{Theodore~W.}\binits{T.~W.}}
(\byear{2003}).
\btitle{An Introduction to Multivariate Statistical Analysis},
\bedition{Third} ed.
\bpublisher{Wiley}.
\end{bbook}
\endbibitem

\bibitem{BHHJZ2020}
\begin{bincollection}[author]
\bauthor{\bsnm{Bai},~\bfnm{Zhidong}\binits{Z.}},
  \bauthor{\bsnm{Hou},~\bfnm{Zhiqiang}\binits{Z.}},
  \bauthor{\bsnm{Hu},~\bfnm{Jiang}\binits{J.}},
  \bauthor{\bsnm{Jiang},~\bfnm{Dandan}\binits{D.}} \AND
  \bauthor{\bsnm{Zhang},~\bfnm{Xiaozhuo}\binits{X.}}
(\byear{2021}).
\btitle{Limiting canonical distribution of two large dimensional random
  vectors}.
In \bbooktitle{Methodology and Applications of Statistics.}
\bpages{213--238}.
\bpublisher{Springer}.
\end{bincollection}
\endbibitem

\bibitem{BaiS04C}
\begin{barticle}[author]
\bauthor{\bsnm{Bai},~\bfnm{Zingdong}\binits{Z.}} \AND
  \bauthor{\bsnm{Silverstein},~\bfnm{J.~W.}\binits{J.~W.}}
(\byear{2004}).
\btitle{CLT for linear spectral statistics of large-dimensional sample
  covariance matrices}.
\bjournal{The Annals of Probability}
\bvolume{32}
\bpages{553--605}.
\end{barticle}
\endbibitem

\bibitem{BS2010}
\begin{bbook}[author]
\bauthor{\bsnm{Bai},~\bfnm{Zhidong}\binits{Z.}} \AND
  \bauthor{\bsnm{Silverstein},~\bfnm{Jack~W.}\binits{J.~W.}}
(\byear{2010}).
\btitle{Spectral Analysis of Large Dimensional Random Matrices},
\bedition{second} ed.
\bseries{Springer {Series} in {Statistics}}.
\bpublisher{Springer, New York}.
\bdoi{10.1007/978-1-4419-0661-8}
\bmrnumber{2567175}
\end{bbook}
\endbibitem

\bibitem{Bao2017test}
\begin{barticle}[author]
\bauthor{\bsnm{Bao},~\bfnm{Zhigang}\binits{Z.}},
  \bauthor{\bsnm{Hu},~\bfnm{Jiang}\binits{J.}},
  \bauthor{\bsnm{Pan},~\bfnm{Guangming}\binits{G.}} \AND
  \bauthor{\bsnm{Zhou},~\bfnm{Wang}\binits{W.}}
(\byear{2017}).
\btitle{Test of independence for high-dimensional random vectors based on
  freeness in block correlation matrices}.
\bjournal{Electronic Journal of Statistics}
\bvolume{11}
\bpages{1527--1548}.
\end{barticle}
\endbibitem

\bibitem{BaoH19Canonical}
\begin{barticle}[author]
\bauthor{\bsnm{Bao},~\bfnm{Zhigang}\binits{Z.}},
  \bauthor{\bsnm{Hu},~\bfnm{Jiang}\binits{J.}},
  \bauthor{\bsnm{Pan},~\bfnm{Guangming}\binits{G.}} \AND
  \bauthor{\bsnm{Zhou},~\bfnm{Wang}\binits{W.}}
(\byear{2019}).
\btitle{Canonical correlation coefficients of high-dimensional Gaussian
  vectors: finite rank case}.
\bjournal{The Annals of Statistics}
\bvolume{47}
\bpages{612--640}.
\bdoi{10.1214/18-AOS1704}
\bmrnumber{3909944}
\end{barticle}
\endbibitem

\bibitem{bao2022spectral}
\begin{barticle}[author]
\bauthor{\bsnm{Bao},~\bfnm{Zhigang}\binits{Z.}},
  \bauthor{\bsnm{Hu},~\bfnm{Jiang}\binits{J.}},
  \bauthor{\bsnm{Xu},~\bfnm{Xiaocong}\binits{X.}} \AND
  \bauthor{\bsnm{Zhang},~\bfnm{Xiaozhuo}\binits{X.}}
(\byear{2022}).
\btitle{Spectral statistics of sample block correlation matrices}.
\bjournal{arXiv preprint arXiv:2207.06107}.
\end{barticle}
\endbibitem

\bibitem{HJ2012M}
\begin{bbook}[author]
\bauthor{\bsnm{Horn},~\bfnm{Roger~A.}\binits{R.~A.}} \AND
  \bauthor{\bsnm{Johnson},~\bfnm{Charles~R.}\binits{C.~R.}}
(\byear{2012}).
\btitle{Matrix Analysis}.
\bpublisher{Cambridge University Press}.
\end{bbook}
\endbibitem

\bibitem{hotelling1935most}
\begin{barticle}[author]
\bauthor{\bsnm{Hotelling},~\bfnm{Harold}\binits{H.}}
(\byear{1935}).
\btitle{The most predictable criterion.}
\bjournal{Journal of Educational Psychology}
\bvolume{26}
\bpages{139--142}.
\end{barticle}
\endbibitem

\bibitem{hotelling1936rela}
\begin{barticle}[author]
\bauthor{\bsnm{Hotelling},~\bfnm{Harold}\binits{H.}}
(\byear{1936}).
\btitle{Relations between two sets of variables.}
\bjournal{Biometrika}
\bvolume{28}
\bpages{321--377}.
\end{barticle}
\endbibitem

\bibitem{hou2023spiked}
\begin{barticle}[author]
\bauthor{\bsnm{Hou},~\bfnm{Zhiqiang}\binits{Z.}},
  \bauthor{\bsnm{Zhang},~\bfnm{Xiaozhuo}\binits{X.}},
  \bauthor{\bsnm{Bai},~\bfnm{Zhidong}\binits{Z.}} \AND
  \bauthor{\bsnm{Hu},~\bfnm{Jiang}\binits{J.}}
(\byear{2023}).
\btitle{Spiked eigenvalues of noncentral Fisher matrix with applications}.
\bjournal{Bernoulli}
\bvolume{29}
\bpages{3171--3197}.
\end{barticle}
\endbibitem

\bibitem{knowles2017anisotropic}
\begin{barticle}[author]
\bauthor{\bsnm{Knowles},~\bfnm{Antti}\binits{A.}} \AND
  \bauthor{\bsnm{Yin},~\bfnm{Jun}\binits{J.}}
(\byear{2017}).
\btitle{Anisotropic local laws for random matrices}.
\bjournal{Probability Theory and Related Fields}
\bvolume{169}
\bpages{257--352}.
\end{barticle}
\endbibitem

\bibitem{ma2021sample}
\begin{barticle}[author]
\bauthor{\bsnm{Ma},~\bfnm{Zongming}\binits{Z.}} \AND
  \bauthor{\bsnm{Yang},~\bfnm{Fan}\binits{F.}}
(\byear{2023}).
\btitle{Sample canonical correlation coefficients of high-dimensional random
  vectors with finite rank correlations}.
\bjournal{Bernoulli}
\bvolume{29}
\bpages{1905--1932}.
\end{barticle}
\endbibitem

\bibitem{Mu1982}
\begin{bbook}[author]
\bauthor{\bsnm{Muirhead},~\bfnm{Robb~J.}\binits{R.~J.}}
(\byear{1982}).
\btitle{Aspects of Multivariate Statistical Theory}.
\bpublisher{Wiley}.
\end{bbook}
\endbibitem

\bibitem{yang2022limiting}
\begin{barticle}[author]
\bauthor{\bsnm{Yang},~\bfnm{Fan}\binits{F.}}
(\byear{2022}).
\btitle{Limiting distribution of the sample canonical correlation coefficients
  of high-dimensional random vectors}.
\bjournal{Electronic Journal of Probability}
\bvolume{27}
\bpages{1--71}.
\end{barticle}
\endbibitem

\bibitem{2012The}
\begin{barticle}[author]
\bauthor{\bsnm{Yang},~\bfnm{Y.}\binits{Y.}} \AND
  \bauthor{\bsnm{Pan},~\bfnm{G.}\binits{G.}}
(\byear{2012}).
\btitle{The convergence of the empirical distribution of canonical correlation
  coefficients}.
\bjournal{Electronic Journal of Probability}
\bvolume{17}
\bpages{1--13}.
\end{barticle}
\endbibitem

\bibitem{2015Independence}
\begin{barticle}[author]
\bauthor{\bsnm{Yang},~\bfnm{Y.}\binits{Y.}} \AND
  \bauthor{\bsnm{Pan},~\bfnm{G.}\binits{G.}}
(\byear{2015}).
\btitle{Independence test for high dimensional data based on regularized
  canonical correlation coefficients}.
\bjournal{The Annals of Statistics}
\bvolume{43}
\bpages{467--500}.
\end{barticle}
\endbibitem

\bibitem{Zhang2023L}
\begin{barticle}[author]
\bauthor{\bsnm{Zhang},~\bfnm{Xiaozhuo}\binits{X.}},
  \bauthor{\bsnm{Bai},~\bfnm{Zhidong}\binits{Z.}} \AND
  \bauthor{\bsnm{Jiang},~\bfnm{Hu}\binits{H.}}
(\byear{2023}).
\btitle{Limiting spectral distribution of high-dimensional noncentral Fisher
  matrices and its analysis}.
\bjournal{SCIENCE CHINA Mathematics}
\bvolume{66}
\bpages{393--408}.
\bdoi{https://doi.org/10.1007/s11425-020-1958-1}
\end{barticle}
\endbibitem

\end{thebibliography}


\end{document}